\documentclass[11pt,dvips,twoside,letterpaper]{article}
\usepackage[usenames]{color}

\usepackage{pslatex}
\usepackage{fancyhdr}
\usepackage{graphicx}
\usepackage{geometry}

\def\figurename{Figure} 
\makeatletter
\renewcommand{\fnum@figure}[1]{\figurename~\thefigure.}
\makeatother

\def\tablename{Table} 
\makeatletter
\renewcommand{\fnum@table}[1]{\tablename~\thetable.}
\makeatother

\usepackage{amsmath}
\usepackage{amssymb}
\usepackage{amsfonts}
\usepackage{amsthm,amscd}

\newtheorem{theorem}{Theorem}[section]
\newtheorem{lemma}[theorem]{Lemma}
\newtheorem{corollary}[theorem]{Corollary}
\newtheorem{proposition}[theorem]{Proposition}
\theoremstyle{definition}
\newtheorem{definition}[theorem]{Definition}

\theoremstyle{remark}
\newtheorem{remark}[theorem]{Remark}

\numberwithin{equation}{section}


\setlength{\topmargin}{-0.35in}
\setlength{\textheight}{8.5in}    
\setlength{\textwidth}{5.5in}    
\setlength{\oddsidemargin}{0.5in} \setlength{\evensidemargin}{0.5in}
\setlength{\headheight}{36pt} \setlength{\headsep}{8pt}

\begin{document}
\vskip 0.4in
\title{\bfseries\scshape{Homogenization of a nonlinear elliptic problem with
large nonlinear potential}}

\author{\bfseries\scshape Hermann Douanla\thanks{E-mail address: \tt{douanla@chalmers.se}}\\
Department of Mathematical Sciences \\Chalmers University of
Technology\\ Gothenburg, SE-41296, Sweden\\\texttt{}\\
\bfseries\scshape Nils Svanstedt\thanks{E-mail address: \tt{nilss@chalmers.se}}\\
Department of Mathematical Sciences \\ University of Gothenburg\\
Gothenburg, SE-41296, Sweden}

\date{}
\maketitle \thispagestyle{empty}

\begin{abstract} \noindent
Homogenization is studied for a nonlinear elliptic boundary-value
problem with a large nonlinear potential. More specifically we are
interested in the asymptotic behavior of a sequence of p-Laplacians
of the form
$$
-\text{div}\left(a(\frac{x}{\varepsilon})|Du_\varepsilon|^{p-2}Du_\varepsilon\right)
+\frac{1}{\varepsilon}V(\frac{x}{\varepsilon})|u_\varepsilon|^{p-2}u_\varepsilon=f.
$$
It is shown that, under a centring condition on the potential $V$,
there exists a two-scale homogenized system with solution $(u, u_1)$
such that the sequence $u_\varepsilon$ of solutions converges weakly
to $u$ in $W^{1,p}$ and the gradients $D_x u_\varepsilon$ two-scale
converges weakly to $D_x u+ D_y u_1$ in $L^p$, respectively. We
characterize the limit system explicitly by means of two-scale
convergence and a new convergence result.
\end{abstract}

\noindent {\bf AMS Subject Classification:}35B27, 35B40.

\vspace{.08in}

\noindent \textbf{Keywords}: nonlinear, potential, homogenization.

\section{Introduction}\label{s1}
In this article, we are interested in the asymptotic behavior (as
$\varepsilon\to 0$) of the nonlinear boundary-value problem

\begin{equation} \label{eq1}
\left\{\begin{aligned}&
-\text{div}\left(a(\frac{x}{\varepsilon})|Du_\varepsilon|^{p-2}Du_\varepsilon\right)
+\frac{1}{\varepsilon}V(\frac{x}{\varepsilon})|u_\varepsilon|^{p-2}u_\varepsilon=f\quad
\text{in }\quad
\Omega\\&u_\varepsilon=0\qquad\qquad\qquad\qquad\qquad\qquad\qquad\qquad\quad\quad\text{
on }
\partial\Omega,
\end{aligned}\right.
\end{equation}
where we assume that $2\leq p<N$ and that $\Omega$ is a bounded open
set in $\mathbb{R}^N$. We assume that the matrix $a(y)$ is positive
definite with entries that are bounded and periodic with period
$Y=(0,1)^N$ and that the function $V(y)$ is smooth, periodic and has
vanishing mean value on $Y$. The existence theory is valid for
$1<p<\infty$, but for the asymptotic analysis we restrict ourselves
to the case $2\leq p<N$ in this work. For the case $N\leq p<\infty$
one would need to use Morrey’s inequality in the estimates below
instead of the Poincar\'{e} inequality. Also for the case $1<p<2$
other function spaces are needed. Problem \ref{eq1} is a nonlinear
Schr\"{o}dinger type equation with large potential. For the linear
case, $p=2$, homogenization results for Schr\"{o}dinger equations
with large potential term of order $\frac{1}{\varepsilon^2}$ is
studied by Allaire and Piatnitski in \cite{AP1}, where they use a
factorization principle to handle the large potential. We include
the case $p=2$ in this work since the homogenization result is also
valid in this case.

A nonlinear time-dependent reaction-diffusion problem with linear
elliptic term and nonlinear reaction term of order
$\frac{1}{\varepsilon}$ has been recently studied by Allaire and
Piatnitski in \cite{AP2}.

Homogenization of the p-Laplace equation is by now standard, see
e.g. \cite{AAPP} and the references therein. The novelty in this
work is the presence of a large potential term. The rate
$\frac{1}{\varepsilon}$ is motivated by the fact that this scaling
yields a local problem which has a potential term (see the second
equation in (\ref{eq2}) and thus it has the same structure as
(\ref{eq1}). In the linear case this can be illustrated by a formal
two-scale asymptotic analysis. The effect of the large potential
term is also very interesting. In the linear case the homogenized
equation is a convection-diffusion equation, so the limit equation
is of different type than the original equation. Without the
$\frac{1}{\varepsilon}$-scaling the homogenized equation is of the
same type as the original equation. With nonlinear potential the
homogenized equation has the lower order term $V(y) F'(u)u_1$, where
$u_1=u_1(u,D_x u)$ so it can contain both convection type and
potential type contributions. This effect is already observed and
thoroughly discussed in \cite{AP2}. The
$\frac{1}{\varepsilon}$-scaling is also motivated by the fact that
it provides an a priori estimate independent of $\varepsilon$, see
Lemma \ref{l1}, which is needed in order to prove the homogenization
result for (\ref{eq1}).

A key result in this work, which significantly simplifies the
homogenization in our approach, is the new compactness result, Lemma
\ref{l2}, which is used to handle the nonlinear potential term when
$p>2$. The result of Lemma \ref{l2} is dependent on the centring
condition (mean value zero over the period) of the potential $V$
which allows us to apply a Fredholm alternative argument. Dropping
the centring condition one way to overcome the difficulty of the
large term could be to try to establish a factorization principle
argument like in \cite{AP1}.

In this work we prove that, as $\varepsilon\to 0$, the solution
$u_\varepsilon$ to (\ref{eq1}) satisfies

$$
u_\varepsilon\to u\quad \text{in}\quad L^p(\Omega),
$$
$$
D_x u_\varepsilon \to D_x u+D_y u_1\quad\text{in}\quad
L^p(\Omega)\quad \text{two-scale weakly},
$$
where $(u,u_1)\in W^{1,p}_0(\Omega)\times
L^p(\Omega;W^{1,p}_{per}(Y))$ solves the two-scale homogenized
system

\begin{equation} \label{eq2}
\left\{\begin{aligned}& -\text{div}_x\left(a(y)|D_x u+D_y
u_1|^{p-2}(D_x u+D_y u_1)\right)+V(y)F'(u)u_1=f,\\&
-\text{div}_y\left(a(y)|D_x u+D_y u_1|^{p-2}(D_x u+D_y
u_1)\right)+V(y)F(u)=0,\\&u=0\quad\text{ on }
\partial\Omega,
\end{aligned}\right.
\end{equation}
where $F(u)=|u|^{p-2}u$ and for $p>2$, $F'(u)=(p-1)u^{p-2}$ for
$u\geq 0$ and $F'(u)=(1-p)|u|^{p-3}u$ for $u< 0$.

\section{Pseudomonotone operators}\label{s2}
In this section, we introduce a class of operators, pseudomonotone
operators, in order to prove existence of solution to

\begin{equation} \label{eq3}
\left\{\begin{aligned}& -\text{div}\left(a|Du|^{p-2}Du\right)
+V|u|^{p-2}u=f\quad \text{in }\quad
\Omega\\&u=0\qquad\qquad\qquad\qquad\qquad\qquad\quad\text{ on }
\partial\Omega,
\end{aligned}\right.
\end{equation}
where we allow the potential $V$ to be negative but bounded from
below.

We prove the existence of weak solutions $u\in W^{1,p}_0(\Omega)$ to
(\ref{eq3}), where $\Omega$ is a bounded, open set in
$\mathbb{R}^N$, $p$ is a real number $2\leq p<N$, the matrix $a\in
L^\infty(\mathbb{R}^N)$ is strictly positive definite, i.e. there
exists a constant $M>0$ such that
$$
\sum_{i,j=1}^N a_{ij}(x)\xi_j\xi_i\geq M|\xi|^2,
$$
for all $\xi\in \mathbb{R}^N$ and a.e. $x\in \mathbb{R}^N$ and where
the potential $V\in L^\infty(\Omega)$.

We emphasize that monotonicity theory does not apply here, since the
term corresponding to $F(u)=V|u|^{p-2}u$ does not necessarily
satisfy the monotonicity condition
$$
(F(u)-F(v))(u-v)\geq 0.
$$

We recall some basic facts for pseudomonotone operators. Most of the
details can be found in \cite{Zeidler}.

In the sequel the letter $C$ with or without subindex denotes a
generic constant whose value might change from one line to another.
\begin{definition}\label{d1}
Let $X$ be a reflexive Banach space with dual space $X^*$. An
operator $T: X\to X^*$ is called pseudomonotone if $ u_n\to u,
\text{ weakly in } X \text{ as } n \to +\infty $ and $ \lim_{n\to
+\infty}\langle Tu_n, u_n-u\rangle\leq 0 $ imply
$$
\langle Tu_n, u-w\rangle\leq\lim_{n\to +\infty}\langle Tu_n,
u_n-w\rangle
$$
for all $w\in X$.
\end{definition}

\begin{definition}\label{d2}
An operator $T: X\to X^*$ is called strongly continuous if $u_n\to
u$ weakly as $n\to+\infty$ implies that $Tu_n\to Tu$ in $X^*$.
\end{definition}

\begin{proposition}\label{p1}
Let $T_1,T_2: X\to X^*$ be two operators on the reflexive Banach
space $X$. If $T_1$ is monotone and hemicontinuous and $T_2$ is
strongly continuous, then $T_1+T_2$ is pseudomonotone.
\end{proposition}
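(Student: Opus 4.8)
The plan is to verify the defining property of pseudomonotonicity in Definition \ref{d1} directly for $T:=T_1+T_2$. So I will start from a sequence $u_n\rightharpoonup u$ weakly in $X$ with $\limsup_{n\to\infty}\langle Tu_n,u_n-u\rangle\le 0$, and aim to establish $\langle Tu,u-w\rangle\le\liminf_{n\to\infty}\langle Tu_n,u_n-w\rangle$ for every $w\in X$.

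\textbf{First} I would split off the $T_2$-part. Since $(u_n)$ is bounded (Banach--Steinhaus) and $T_2$ is strongly continuous, $T_2u_n\to T_2u$ in $X^*$, so pairing a norm-convergent sequence in $X^*$ against the bounded sequence $(u_n-w)$ gives $\langle T_2u_n,u_n-u\rangle\to0$ and, for each fixed $w$, $\langle T_2u_n,u_n-w\rangle\to\langle T_2u,u-w\rangle$. The standing hypothesis then transfers to $T_1$: $\limsup_n\langle T_1u_n,u_n-u\rangle\le 0$. On the other hand, monotonicity of $T_1$ gives the reverse estimate $\langle T_1u_n,u_n-u\rangle\ge\langle T_1u,u_n-u\rangle\to0$, hence $\liminf_n\langle T_1u_n,u_n-u\rangle\ge 0$. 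Altogether $\langle T_1u_n,u_n-u\rangle\to0$.

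\textbf{Next} comes a Minty-type argument for $T_1$, which I expect to be the crux. Fixing $v\in X$ and $t\in(0,1]$, I would test monotonicity against $z_t:=u+t(v-u)$, using the identity $u_n-z_t=(1-t)(u_n-u)+t(u_n-v)$ so that $\langle T_1u_n,u_n-z_t\rangle=(1-t)\langle T_1u_n,u_n-u\rangle+t\langle T_1u_n,u_n-v\rangle\ge\langle T_1z_t,u_n-z_t\rangle$. The point is that the term $(1-t)\langle T_1u_n,u_n-u\rangle$ is harmless thanks to the first step, so after taking $\liminf_n$, dividing by $t$, and letting $t\to0^+$ (here hemicontinuity of $T_1$ is precisely what is needed to pass to the limit in $\langle T_1z_t,u-v\rangle$), one obtains $\liminf_n\langle T_1u_n,u_n-v\rangle\ge\langle T_1u,u-v\rangle$ for all $v\in X$.

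\textbf{Finally} I would recombine: applying this inequality with $v=w$ and adding the limit $\langle T_2u_n,u_n-w\rangle\to\langle T_2u,u-w\rangle$ from the first step yields $\liminf_n\langle Tu_n,u_n-w\rangle\ge\langle T_1u,u-w\rangle+\langle T_2u,u-w\rangle=\langle Tu,u-w\rangle$, which is the required pseudomonotonicity inequality. The main obstacle is the second step: the Minty trick works only because the potentially uncontrolled pairing $\langle T_1u_n,u_n-u\rangle$ has first been forced to zero, and because the $t\to0$ passage relies on nothing more than hemicontinuity — no boundedness or demicontinuity of $T_1$ is invoked, since the unbounded pairings that could in principle arise are exactly those already shown to vanish.
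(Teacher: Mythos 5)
Your proof is correct: the paper itself gives no argument for Proposition \ref{p1}, quoting it as a known result from \cite{Zeidler}, and your proof is exactly the classical one found there — peel off the strongly continuous part $T_2$ (whose pairings converge because $T_2u_n\to T_2u$ in norm against the bounded sequence $u_n-w$), deduce $\langle T_1u_n,u_n-u\rangle\to 0$ from monotonicity, and then run Minty's trick with $z_t=u+t(v-u)$, where hemicontinuity handles the $t\to 0^+$ passage. Note only that Definition \ref{d1} as printed contains typos ($Tu_n$ for $Tu$, and $\lim$ for $\limsup$/$\liminf$); you have proved the intended standard statement, which is the one actually used in Theorem \ref{t2}.
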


The main existence result for pseudomonotone operators, see
\cite{Zeidler}, reads as follows:

\begin{theorem}\label{t1}
Assume $T: X\to X^*$ is pseudomonotone, bounded and coercive on the
real, separable, reflexive Banach space $X$. Then, the operator
equation $Tu=f$ has at least one solution $u\in X$ for every $f\in
X^*$.
\end{theorem}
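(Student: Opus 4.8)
The plan is to prove this by the Galerkin method. Since $X$ is separable, fix a sequence $(w_k)_{k\ge1}$ in $X$ whose finite subsets are linearly independent and with $\bigcup_n\operatorname{span}\{w_1,\dots,w_n\}$ dense in $X$; write $X_n=\operatorname{span}\{w_1,\dots,w_n\}$. For each $n$ I would look for $u_n\in X_n$ solving the Galerkin equations $\langle Tu_n,w_j\rangle=\langle f,w_j\rangle$ for $j=1,\dots,n$. Identifying $X_n$ with $\R^n$ via the basis, this is the problem of finding a zero of the map $G_n:\R^n\to\R^n$ whose $j$-th component is $c\mapsto\langle T(\textstyle\sum_k c_k w_k),w_j\rangle-\langle f,w_j\rangle$. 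The map $G_n$ is continuous because a bounded pseudomonotone operator is demicontinuous: if $v_m\to v$ in $X$ then $(Tv_m)$ is bounded in $X^*$, any weak cluster point $\chi$ of $(Tv_m)$ satisfies $\langle Tv_m,v_m-v\rangle\to0$, and Definition~\ref{d1} then forces $\langle Tv-\chi,v-w\rangle\le0$ for every $w$, i.e. $Tv=\chi$, so $Tv_m\rightharpoonup Tv$; restricted to the finite-dimensional space $X_n$ this is ordinary continuity.

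Solvability of the Galerkin system and the a priori bound both come from coercivity. Testing the equations with $u_n$ gives $\langle Tu_n,u_n\rangle=\langle f,u_n\rangle\le\|f\|_{X^*}\|u_n\|$, and since $\langle Tu,u\rangle/\|u\|\to+\infty$ as $\|u\|\to\infty$, this yields $\|u_n\|\le R$ with $R$ independent of $n$. Moreover, for $\|u\|$ (hence $|c|$) large enough one has $\langle G_n(c),c\rangle=\langle Tu,u\rangle-\langle f,u\rangle>0$, so the standard corollary of Brouwer's fixed point theorem provides a zero $u_n$ of $G_n$; the same coercivity estimate shows any such $u_n$ satisfies $\|u_n\|\le R$. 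As $T$ is bounded, $(Tu_n)$ is also bounded in $X^*$.

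It remains to pass to the limit. By reflexivity of $X$ (hence of $X^*$), pass to a subsequence with $u_n\rightharpoonup u$ in $X$ and $Tu_n\rightharpoonup\chi$ in $X^*$. For fixed $j$ and all $n\ge j$ we have $\langle Tu_n,w_j\rangle=\langle f,w_j\rangle$; sending $n\to\infty$ gives $\langle\chi,w_j\rangle=\langle f,w_j\rangle$, and density of $\bigcup_n X_n$ forces $\chi=f$. Now $\langle Tu_n,u_n\rangle=\langle f,u_n\rangle\to\langle f,u\rangle$ and $\langle Tu_n,u\rangle\to\langle\chi,u\rangle=\langle f,u\rangle$, so $\langle Tu_n,u_n-u\rangle\to0$; in particular $\lim_n\langle Tu_n,u_n-u\rangle\le0$. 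Pseudomonotonicity then gives, for every $w\in X$,
$$
\langle Tu,u-w\rangle\ \le\ \lim_n\langle Tu_n,u_n-w\rangle\ =\ \langle f,u\rangle-\langle f,w\rangle\ =\ \langle f,u-w\rangle ,
$$
and choosing $w=u-z$ for arbitrary $z\in X$ yields $\langle Tu-f,z\rangle\le0$ for all $z$, i.e. $Tu=f$.

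I expect the delicate point to be this last limit passage through pseudomonotonicity, specifically securing the hypothesis $\limsup_n\langle Tu_n,u_n-u\rangle\le0$: this is exactly where the Galerkin construction pays off, since testing the Galerkin identity against $u_n$ converts $\langle Tu_n,u_n\rangle$ into $\langle f,u_n\rangle$, which converges by the weak convergence $u_n\rightharpoonup u$, and this together with $Tu_n\rightharpoonup f$ collapses the difference to zero. The other ingredient requiring care is the demicontinuity claim used to make the finite-dimensional maps $G_n$ continuous; the coercive bound, the Brouwer argument, and the density argument are routine.
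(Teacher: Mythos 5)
Your argument is correct, but note that the paper itself offers no proof of Theorem \ref{t1}: this is Br\'{e}zis' existence theorem for pseudomonotone operators, stated and quoted from \cite{Zeidler}, so there is no in-paper argument to compare against. What you have written is essentially the standard textbook proof from that reference, and all the steps check out: the demicontinuity of a bounded pseudomonotone operator (needed for continuity of the finite-dimensional maps $G_n$) is obtained exactly as you indicate, since $v_m\to v$ strongly gives $\langle Tv_m,v_m-v\rangle\to 0$ and pseudomonotonicity then identifies every weak cluster point of the bounded sequence $(Tv_m)$ with $Tv$; the acute-angle corollary of Brouwer's theorem together with coercivity solves the Galerkin systems and gives the uniform bound $\|u_n\|\le R$; the Galerkin identities plus density identify the weak limit $\chi$ of $Tu_n$ with $f$; and this is precisely what makes $\langle Tu_n,u_n-u\rangle\to 0$, so that pseudomonotonicity yields $\langle Tu,u-w\rangle\le\langle f,u-w\rangle$ for all $w$ and hence $Tu=f$. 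One small remark: Definition \ref{d1} as printed has a typo (the conclusion should read $\langle Tu,u-w\rangle\le\liminf_{n}\langle Tu_n,u_n-w\rangle$, with $Tu$ rather than $Tu_n$ and $\liminf$ rather than $\lim$); you have implicitly used this corrected, standard form, which is clearly what the paper intends, and your proof is the right way to read it.
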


We will apply Theorem \ref{t1} in order to prove the existence of
weak solutions $u\in W^{1,p}_0(\Omega)$ to (\ref{eq3}). To this end,
let $T, T_1, T_2 : W^{1,p}_0(\Omega)\to W^{-1,q}(\Omega)$ be the
operators defined as
$$
\langle Tu, v\rangle=\langle T_1 u,v\rangle+\langle T_2 u,v\rangle=
\int_{\Omega}a|Du|^{p-2}Du\cdot Dv\,
dx+\int_{\Omega}V|u|^{p-2}uv\,dx,
$$
for all $u,v\in W^{1,p}_0(\Omega)$, where
$\frac{1}{p}+\frac{1}{q}=1$.

We recall that $u\in W^{1,p}_0(\Omega)$ is said to be a weak
solution of (\ref{eq3}) if
\begin{equation}\label{eq4}
\int_{\Omega}a|Du|^{p-2}Du\cdot Dv\,
dx+\int_{\Omega}V|u|^{p-2}uv\,dx=\int_{\Omega}fv\,dx,
\end{equation}
for all $v\in W^{1,p}_0(\Omega)$. Furthermore, (\ref{eq4}) is
equivalent to the operator equation
$$
Tu=T_1 u+T_2 u=f, \quad u\in W^{1,p}_0(\Omega).
$$

\begin{theorem}\label{t2}
The equation
\begin{equation*}
-\text{div}\left(a|Du|^{p-2}Du\right) +V|u|^{p-2}u=f\quad \text{in
}\quad \Omega
\end{equation*}
has a solution $u\in W^{1,p}_0(\Omega)$ for every $f\in
W^{-1,q}(\Omega)$.
\end{theorem}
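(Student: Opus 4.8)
The plan is to apply Theorem~\ref{t1} to the operator $T=T_1+T_2:W^{1,p}_0(\Omega)\to W^{-1,q}(\Omega)$ introduced above. Indeed $W^{1,p}_0(\Omega)$ is a real, separable, reflexive Banach space for $1<p<\infty$, and by~(\ref{eq4}) a function $u$ is a weak solution of~(\ref{eq3}) precisely when it solves the operator equation $Tu=f$; so it is enough to check that $T$ is pseudomonotone, bounded and coercive. Boundedness is the routine part: with the conjugacy $q(p-1)=p$, H\"older's inequality and $a,V\in L^\infty$, one has $|\langle Tu,v\rangle|\le\|a\|_{L^\infty}\|Du\|_{L^p}^{p-1}\|Dv\|_{L^p}+\|V\|_{L^\infty}\|u\|_{L^p}^{p-1}\|v\|_{L^p}\le C\|u\|^{p-1}_{W^{1,p}_0}\|v\|_{W^{1,p}_0}$ for all $u,v$, so $T$ is bounded and, in particular, well defined with values in $W^{-1,q}(\Omega)$.

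For pseudomonotonicity I would use Proposition~\ref{p1}: it suffices to show that $T_1$ is monotone and hemicontinuous and that $T_2$ is strongly continuous. Monotonicity of $T_1$ I would deduce from the positive definiteness of $a$ together with the monotonicity of the vector field $\xi\mapsto|\xi|^{p-2}\xi$, which gives $\langle T_1u-T_1v,u-v\rangle\ge0$; hemicontinuity is immediate, since for fixed $u,w,v$ the integrand of $t\mapsto\langle T_1(u+tw),v\rangle$ converges pointwise and is dominated on bounded $t$-intervals by $C(|Du|+|Dw|)^{p-1}|Dv|\in L^1(\Omega)$, so the map is continuous by dominated convergence. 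The substantive point is the strong continuity of $T_2$, and this is where the restriction $p<N$ enters: if $u_n\rightharpoonup u$ in $W^{1,p}_0(\Omega)$, then by the Rellich--Kondrachov theorem $u_n\to u$ strongly in $L^p(\Omega)$, and since $q(p-1)=p$ the Nemytskii map $s\mapsto|s|^{p-2}s$ is continuous from $L^p(\Omega)$ to $L^q(\Omega)$, so $\|T_2u_n-T_2u\|_{W^{-1,q}}\le\|V\|_{L^\infty}\||u_n|^{p-2}u_n-|u|^{p-2}u\|_{L^q}\to0$. Proposition~\ref{p1} then yields that $T=T_1+T_2$ is pseudomonotone.

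For coercivity, the ellipticity bound $\sum a_{ij}\xi_j\xi_i\ge M|\xi|^2$ gives $\langle T_1u,u\rangle=\int_\Omega a|Du|^{p-2}Du\cdot Du\,dx\ge M\|Du\|_{L^p}^p$, while, writing $\beta\ge0$ for a lower bound of $V$ (so $V\ge-\beta$), $\langle T_2u,u\rangle=\int_\Omega V|u|^p\,dx\ge-\beta\|u\|_{L^p}^p$; combining these with the Poincar\'e inequality $\|u\|_{L^p}\le C_P\|Du\|_{L^p}$ gives $\langle Tu,u\rangle\ge(M-C_P^p\beta)\|Du\|_{L^p}^p$, which, provided the lower bound on $V$ is controlled by the ellipticity constant (so that $M-C_P^p\beta>0$), is $\ge c\|u\|_{W^{1,p}_0}^p$ with $c>0$, whence $\langle Tu,u\rangle/\|u\|_{W^{1,p}_0}\to+\infty$ as $\|u\|_{W^{1,p}_0}\to\infty$. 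With the three properties in hand, Theorem~\ref{t1} furnishes a solution $u\in W^{1,p}_0(\Omega)$ of $Tu=f$, i.e. a weak solution of the equation, for every $f\in W^{-1,q}(\Omega)$. I expect coercivity to be the delicate step: since the zeroth order term $V|u|^{p-2}u$ need not be monotone and $V$ may be negative, one must absorb this term into the $p$-Laplacian part, and it is exactly the lower bound assumed on $V$, together with Poincar\'e's inequality, that makes this absorption work.
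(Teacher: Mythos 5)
Your plan follows the paper's proof almost step for step: the same splitting $T=T_1+T_2$, Proposition \ref{p1} to obtain pseudomonotonicity, Rellich--Kondrachov for the strong continuity of $T_2$, H\"older's inequality for boundedness, and Br\'ezis' theorem (Theorem \ref{t1}) to conclude. Your two small deviations are harmless or mild improvements: you check hemicontinuity of $T_1$ by dominated convergence where the paper proves full continuity via the Nemytskii operator, and you get strong continuity of $T_2$ directly from the continuity of $s\mapsto|s|^{p-2}s$ from $L^p(\Omega)$ to $L^q(\Omega)$ (keep the embedding constant from $\|v\|_{L^p}\le C\|v\|_{W^{1,p}_0}$ in that estimate), where the paper runs a pointwise-a.e.\ convergence plus Egoroff argument; both routes are legitimate.

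The one substantive divergence is coercivity, and there your version is the more careful one but does not prove the theorem as stated. You absorb $\langle T_2u,u\rangle\ge-\beta\|u\|_{L^p}^p$ into the elliptic term via Poincar\'e and therefore need $M-C_P^{p}\beta>0$; no such smallness hypothesis appears in Theorem \ref{t2}, which only assumes $V\in L^\infty(\Omega)$ bounded from below and possibly negative. The paper's own proof uses exactly the two bounds you wrote, $\langle T_1u,u\rangle\ge M\|u\|^p_{W^{1,p}_0(\Omega)}$ and $\langle T_2u,u\rangle\ge-\|V\|_{L^\infty(\Omega)}\|u\|^p_{L^p(\Omega)}$, and then simply asserts that $T$ is coercive ``since $T_1$ is coercive''; because both terms scale like the $p$-th power of the norm, that assertion needs the same kind of smallness (or a sign condition on $V$) and is not justified as written. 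Indeed, for $V\equiv-\lambda$ with $\lambda$ large one has $\langle Tu,u\rangle\le 0$ along the ray $u=t\varphi$ for a suitable fixed $\varphi$, so the coercivity hypothesis of Theorem \ref{t1} genuinely fails without some restriction. In short: your argument is correct under the extra hypothesis you explicitly flag, it reproduces the paper's route everywhere else, and the conditional step you isolate is precisely the weak point of the paper's own proof of this theorem.
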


\begin{proof}
Recall that, for $1<p<\infty$, $W^{1,p}_0(\Omega)$ is a reflexive
Banach space. In order to apply the Br\'{e}zis' theorem, we have to
show that the operator $T$ is pseudomonotone, bounded and coercive
on $W^{1,p}_0(\Omega)$. It is well known that $T_1$ is monotone:
$$
\langle T_1 u - T_1 v,
u-v\rangle=\int_{\Omega}(a|Du|^{p-2}Du-a|Dv|^{p-2}Dv)\cdot (Du-Dv)\,
dx\geq 0.
$$
We also have
$$
\langle T_1 u,u \rangle=\int_{\omega}a|Du|^p\,dx\geq
M\|u\|^p_{W^{1,p}_0(\Omega)}.
$$
Thus, $T_1$ is coercive since
$$
\frac{\langle T_1 u,u \rangle}{\|u\|_{W^{1,p}_0(\Omega)}}\geq
M\|u\|_{W^{1,p}_0(\Omega)}^{p-1}\to +\infty \quad\text{ as }\quad
\|u\|_{W^{1,p}_0(\Omega)}\to+\infty.
$$
We now prove that $T_1$ is continuous. Let $F(\xi)=|\xi|^{p-2}\xi$
for $\xi\in\mathbb{R}$ and let $u_n\to u$ in $W^{1,p}_0(\Omega)$. By
the continuity of $F: L^p(\Omega)\to L^q(\Omega)$, it holds for
$1\leq i\leq N$ that $F(D_i u_n)\to F(D_i u)$ in $L^q(\Omega)$. The
H\"{o}lder inequality yields
\begin{eqnarray*}
   |\langle T_1 u_n - T_1 u, v\rangle|&=&\left|\int_{\Omega}\sum_{i=1}^N(aF(D_i u_n)-aF(D_i u))D_i v\,dx\right|  \\
   &\leq &\|a\|_{L^\infty(\Omega)}\sum_{i=1}^N\|F(D_i u_n)-F(D_i
   u)\|_{L^q(\Omega)}\|Dv\|_{L^p(\Omega)}.
\end{eqnarray*}
Therefore
$$
\|T_1 u_n - T_1 u\|_*=\sup_{\|v\|\leq 1}|\langle T_1 u_n - T_1
u,v\rangle|\leq C\sum_{i=1}^N\|F(D_i u_n)-F(D_i
   u)\|_{L^q(\Omega)}\to 0.
$$

As regards the boundedness of $T_1$, we have
\begin{eqnarray*}
  |\langle T_1 u,v\rangle| &=&\left| \int_{\Omega}a|Du|^{p-2}Du\cdot Dv\,
dx \right|\leq C \int_{\Omega}||Du|^{p-2}Du\cdot Dv|\,
dx\\
   &=& C\int_{\Omega}|Du|^{p-1}|Dv|\,
dx\leq C
\left(\int_{\Omega}|Du|^{p}\,dx\right)^\frac{1}{q}\left(\int_{\Omega}|Dv|^{p}\,dx\right)^\frac{1}{p}.
\end{eqnarray*}
Thus,
$$
\|T_1 u\|_*=\sup_{\|v\|_{W^{1,p}_0(\Omega)}\leq 1}|\langle T_1
u,v\rangle|\leq C\|u\|^{p/q}_{W^{1,p}_0(\Omega)}.
$$

Next, we consider $T_2$ and prove that it is strongly continuous.
Let $u_n\to u$ weakly in $W^{1,p}_0(\Omega)$. Then the
Rellich-Kondrachov theorem yields $u_n\to u$ in $L^p(\Omega)$. We
estimate
$$
|\langle T_2 u_n - T_2 u,v\rangle|\leq
\|V\|_{L^\infty(\Omega)}\int_\Omega|F(u_n)-F(u)||v|\,dx.
$$
Since $u_n\to u$ in $L^p(\Omega)$, it follows up to a subsequence
that, $u_n(x)\to u(x)$ a.e. in $\Omega$, and by continuity
$F(u_n)(x)\to F(u)(x)$  and $|F(u_n)(x)-F(u)(x)||v(x)|\to 0$ a .e.
in $\Omega$. The Egoroff theorem applies and gives us for
arbitrarily small $\beta$ a Lebesgue measurable set
$\Omega_\beta\subset \Omega$ with $|\Omega\setminus\Omega_\beta|\leq
\beta$ such that $|F(u_n)(x)-F(u)(x)||v(x)|\to 0$ uniformly in
$\Omega_\beta$. Denoting the characteristic function of
$\Omega_\beta$ by $\chi_{\Omega_\beta}$, a limit passage
($n\to\infty$) yields
$$
\int_{\Omega}\chi_{\Omega_\beta}|F(u_n)(x)-F(u)(x)||v(x)|\, dx\to 0.
$$
As the integral is uniformly bounded with respect to $n$ one may
send $\beta\to 0$ and get
$$
\|T_2 u_n-T_2 u\|_* =\sup_{\|v\|_{W^{1,p}_0(\Omega)}\leq 1}|\langle
T_2 u_n - T_2 u,v\rangle|\to 0,
$$
so $T_2$ is strongly continuous.

We now conclude with Proposition \ref{p1}: The operator $T$ is
pseudomonotone  as a strongly continuous perturbation of the
monotone operator $T_1$. The operator $T_2$ is bounded, since
strongly continuous. Hence $T=T_1+T_2$ is also bounded. But, as
$$
\inf_{x\in\mathbb{R}}V|x|^{p-2}xx=\inf_{x\in\mathbb{R}}V|x|^{p}=0,
$$
 we have that
$$
\langle T_2 u, u
\rangle=\int_\Omega(V|u|^{p-2}uu)\,dx\geq-\|V\|_{L^\infty(\Omega)}\|u\|^p_{L^p(\Omega)},
$$
for all $u\in W^{1,p}_0(\Omega)$. Hence $T=T_1+T_2$ is coercive
since $T_1$ is coercive. Theorems \ref{t1} applies with
$X=W^{1,p}_0(\Omega)$ and $X^*=W^{-1,q}(\Omega)$ and completes the
proof.
\end{proof}

\section{Existence of solution}\label{s3}
We will apply Theorem \ref{t2} from the previous section to prove
the existence of solution to (\ref{eq1}) and (\ref{eq2}). We need
some hypotheses on the coefficients.

\begin{itemize}
    \item [(H1)]The matrix $a$ with entries in $L^\infty(\mathbb{R}^N)$ is strictly positive definite, i.e. there exists
a constant $M>0$ such that
$$ \sum_{i,j=1}^N a_{ij}(y)\xi_j\xi_i\geq M|\xi|^2,
$$
for all $\xi\in \mathbb{R}^N$ and a.e. $y\in \mathbb{R}^N$.
    \item [(H2)] The matrix $a$ is periodic, i.e.
    $a_{ij}(y+ke_n)=a_{ij}(y)$, $1\leq i,j\leq N$, for
    $k\in\mathbb{Z}$ where $\{e_n\}_1^N$ is the canonical basis in
    $\mathbb{R}^N$.
    \item[(H3)] The function $V\in\mathcal{C}^\infty(\mathbb{R}^N)$ is $Y$-periodic and has mean value zero over $Y$.
\end{itemize}
\begin{remark}\label{r1}
For the existence theory it is sufficient to assume that $V\in
L^\infty(\mathbb{R}^N)$, but we will need the smoothness for the
asymptotic analysis.
\end{remark}

Recalling Theorem \ref{t2} we have the following:
\begin{corollary}\label{c1}
Under the hypotheses (H1)-(H3), problem (\ref{eq1}) possesses at
least one solution $u_\varepsilon\in W^{1,p}_0(\Omega)$ for every
$f\in W^{-1,q}(\Omega)$ and every $\varepsilon>0$.
\end{corollary}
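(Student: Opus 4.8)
The plan is to reduce the problem to the setting of Theorem \ref{t2} by freezing the oscillating coefficients. Fix $\varepsilon>0$ and set $a^\varepsilon(x):=a(x/\varepsilon)$ and $V^\varepsilon(x):=\varepsilon^{-1}V(x/\varepsilon)$. The claim will follow once I verify that the pair $(a^\varepsilon,V^\varepsilon)$ meets the hypotheses imposed in Theorem \ref{t2}, namely that $a^\varepsilon\in L^\infty(\Omega)$ is strictly positive definite with a coercivity constant independent of the sample point, and that $V^\varepsilon\in L^\infty(\Omega)$. For the first point, note that scaling the argument does not affect pointwise bounds, so (H1) gives $\sum_{i,j}a^\varepsilon_{ij}(x)\xi_j\xi_i=\sum_{i,j}a_{ij}(x/\varepsilon)\xi_j\xi_i\geq M|\xi|^2$ for the same constant $M>0$, and boundedness of the entries is likewise inherited. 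For the second point, (H3) (or, as Remark \ref{r1} notes, already $V\in L^\infty(\mathbb{R}^N)$) gives $\|V^\varepsilon\|_{L^\infty(\Omega)}=\varepsilon^{-1}\|V\|_{L^\infty(\mathbb{R}^N)}<\infty$; the factor $\varepsilon^{-1}$ is harmless here since $\varepsilon$ is fixed and Theorem \ref{t2} places no restriction on the size of the potential.

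With these observations in hand, Theorem \ref{t2} applies verbatim with the coefficient matrix $a^\varepsilon$ and potential $V^\varepsilon$: for every $f\in W^{-1,q}(\Omega)$ the equation
$$
-\text{div}\!\left(a(\tfrac{x}{\varepsilon})|Du_\varepsilon|^{p-2}Du_\varepsilon\right)+\tfrac{1}{\varepsilon}V(\tfrac{x}{\varepsilon})|u_\varepsilon|^{p-2}u_\varepsilon=f\quad\text{in }\Omega,\qquad u_\varepsilon=0\text{ on }\partial\Omega,
$$
which is exactly (\ref{eq1}), admits at least one weak solution $u_\varepsilon\in W^{1,p}_0(\Omega)$. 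Since the choice of $\varepsilon>0$ was arbitrary, this produces a solution for every $\varepsilon>0$, as asserted.

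I do not expect a genuine obstacle here: the corollary is essentially a specialization of Theorem \ref{t2}, and the only point that deserves a sentence of comment is that the $\varepsilon^{-1}$ in front of $V$ does not interfere with the existence argument, because coercivity of $T$ in the proof of Theorem \ref{t2} comes entirely from $T_1$ (the $p$-Laplacian part), with the potential term contributing only the lower-order bound $\langle T_2u,u\rangle\geq-\|V^\varepsilon\|_{L^\infty(\Omega)}\|u\|^p_{L^p(\Omega)}$, which is finite for fixed $\varepsilon$. If one wished to be more explicit, one could simply restate that $T^\varepsilon:=T_1^\varepsilon+T_2^\varepsilon$ on $W^{1,p}_0(\Omega)$ is pseudomonotone, bounded and coercive exactly as in the proof of Theorem \ref{t2}, and invoke Theorem \ref{t1}.
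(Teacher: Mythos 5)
Your proposal is correct and follows essentially the same route as the paper, which proves the corollary simply by invoking Theorem \ref{t2} with the frozen coefficients $a(\cdot/\varepsilon)$ and $\varepsilon^{-1}V(\cdot/\varepsilon)$ for each fixed $\varepsilon>0$; your verification that (H1)--(H3) yield the $L^\infty$ and uniform ellipticity hypotheses, and that the size of the potential is irrelevant to the statement of Theorem \ref{t2}, is exactly the (implicit) content of the paper's argument.
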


We will postpone the discussion on the existence of solution to
problem (\ref{eq2}) until the last section, but it holds true that
\begin{corollary}\label{c1}
Under the hypotheses (H1)-(H3), problem (\ref{eq2}) possesses at
least one solution $(u,u_1)\in W^{1,p}_0(\Omega)\times
L^p(\Omega;W^{1,p}_{per}(Y))$ for every $f\in W^{-1,q}(\Omega)$.
\end{corollary}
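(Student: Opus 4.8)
The plan is to recast the two‑scale system (\ref{eq2}) as a single operator equation on the product space $X:=W^{1,p}_0(\Omega)\times L^p\big(\Omega;W^{1,p}_{\mathrm{per}}(Y)/\mathbb{R}\big)$ and to apply Theorem~\ref{t1}, exactly as Theorem~\ref{t2} was applied, the product structure being the only genuinely new feature. Multiplying the first equation in (\ref{eq2}) by $v\in W^{1,p}_0(\Omega)$ and the second by $v_1$, integrating over $\Omega$ respectively $\Omega\times Y$, and adding, one is led to seek $(u,u_1)\in X$ with $\langle\mathcal{T}(u,u_1),(v,v_1)\rangle=\int_\Omega fv\,dx$ for all $(v,v_1)\in X$, where, writing $G[u,u_1]:=D_xu+D_yu_1$,
\[
\langle\mathcal{T}(u,u_1),(v,v_1)\rangle=\int_\Omega\!\!\int_Y a(y)|G[u,u_1]|^{p-2}G[u,u_1]\cdot G[v,v_1]\,dy\,dx+\int_\Omega\!\!\int_Y V(y)\big(F'(u)u_1v+F(u)v_1\big)\,dy\,dx;
\]
call the two summands $\langle\mathcal{T}_1(u,u_1),(v,v_1)\rangle$ and $\langle\mathcal{T}_2(u,u_1),(v,v_1)\rangle$. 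Since $W^{1,p}_{\mathrm{per}}(Y)/\mathbb{R}$ is separable, reflexive and $1<p<\infty$, $X$ is a real, separable, reflexive Banach space; passing to the quotient is harmless because, $V$ having mean value zero over $Y$ by (H3), $\mathcal{T}$ sees $u_1$ only through $D_yu_1$ and through $\int_YVu_1\,dy$, neither of which changes when a function of $x$ is added to $u_1$, and a solution of the quotient problem yields a solution in $W^{1,p}_0(\Omega)\times L^p(\Omega;W^{1,p}_{\mathrm{per}}(Y))$ upon choosing any representative.

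The operator $\mathcal{T}_1$ is monotone, hemicontinuous, and bounded from $X$ to $X^*$, and satisfies $\langle\mathcal{T}_1(u,u_1),(u,u_1)\rangle\ge M\int_\Omega\!\int_Y|G[u,u_1]|^p\,dy\,dx$; these are checked exactly as for $T_1$ in the proof of Theorem~\ref{t2}. Coercivity of $\mathcal{T}_1$ then follows from the inequality
\[
\int_\Omega\!\!\int_Y|D_xu+D_yu_1|^p\,dy\,dx\ \ge\ c\Big(\|u\|_{W^{1,p}_0(\Omega)}^p+\|u_1\|_{L^p(\Omega;W^{1,p}_{\mathrm{per}}(Y)/\mathbb{R})}^p\Big),
\]
which I would obtain by combining Jensen's inequality in $y$ (the $Y$‑mean of $D_yu_1$ vanishes, so $\int_Y|D_xu+D_yu_1|^p\,dy\ge|D_xu|^p$) with the Poincar\'{e} inequality on $\Omega$, and then the bound $\int_Y|D_yu_1|^p\le 2^p\int_Y|D_xu+D_yu_1|^p+2^p|D_xu|^p$ with the Poincar\'{e}--Wirtinger inequality on $Y$.

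The substantive step is the strong continuity of $\mathcal{T}_2$, and this is where the compactness machinery of the paper is needed. Let $(u_n,u_{1,n})\rightharpoonup(u,u_1)$ in $X$. By Rellich--Kondrachov, $u_n\to u$ in $L^p(\Omega)$; since $p\ge2$ this gives $F(u_n)\to F(u)$ in $L^q(\Omega)$ and $F'(u_n)\to F'(u)$ in $L^{p/(p-2)}(\Omega)$ by continuity of the corresponding Nemytskii operators (when $p=2$, $F'\equiv1$ and the claim is trivial). The summand $\int_\Omega\!\int_Y VF(u)v_1$ is then strongly continuous by the Egoroff argument used for $T_2$ in Theorem~\ref{t2}, once one notes that in the quotient $\|v_1\|_{L^p(\Omega\times Y)}\le C\|v_1\|_X$ by Poincar\'{e}--Wirtinger. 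For $\int_\Omega\!\int_Y VF'(u)u_1v$ one is testing $v\in W^{1,p}_0(\Omega)$ against $g_n(x):=F'(u_n(x))\int_YV(y)u_{1,n}(x,y)\,dy$; since $x\mapsto\int_YVu_{1,n}(x,\cdot)\,dy$ is a bounded linear image of $u_{1,n}$, it converges weakly in $L^p(\Omega)$, and combined with the strong convergence of $F'(u_n)$ this yields $g_n\rightharpoonup g:=F'(u)\int_YVu_1\,dy$ weakly in $L^q(\Omega)$; because the unit ball of $W^{1,p}_0(\Omega)$ is relatively compact in $L^p(\Omega)$, this weak convergence is uniform over that ball, i.e. $g_n\to g$ in $W^{-1,q}(\Omega)$. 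This last passage is precisely the new compactness result (Lemma~\ref{l2}), whose proof rests on a Fredholm alternative made available by the centring condition on $V$. Hence $\mathcal{T}_2$ is strongly continuous, therefore bounded, and by Proposition~\ref{p1} the operator $\mathcal{T}=\mathcal{T}_1+\mathcal{T}_2$ is pseudomonotone and, being a sum of bounded operators, is itself bounded.

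The remaining, and I expect the hardest, point is coercivity of the full operator $\mathcal{T}$ — the same difficulty that already affects $T$ in Theorem~\ref{t2}. Taking $(v,v_1)=(u,u_1)$ and using $F'(u)u+F(u)=pF(u)$ gives
\[
\langle\mathcal{T}_2(u,u_1),(u,u_1)\rangle=p\int_\Omega F(u)\int_YV(y)u_1(x,y)\,dy\,dx,
\]
and here the centring of $V$ must be used quantitatively: writing $V=-\operatorname{div}_y\theta$ with $\theta$ smooth and $Y$‑periodic (possible since $\int_YV\,dy=0$ and $V$ is smooth) one has $\int_YVu_1\,dy=\int_Y\theta\cdot D_yu_1\,dy$, so that $|\langle\mathcal{T}_2(u,u_1),(u,u_1)\rangle|\le p\|\theta\|_{\infty}\|u\|_{L^p(\Omega)}^{p-1}\|D_yu_1\|_{L^p(\Omega\times Y)}$; Young's inequality together with the inequalities of the second paragraph bounds this by a multiple of $\langle\mathcal{T}_1(u,u_1),(u,u_1)\rangle$ plus a multiple of $\|u\|_{L^p(\Omega)}^p$, the latter controlled in turn by $\|D_xu\|_{L^p(\Omega)}^p$ via Poincar\'{e}. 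Showing that these contributions can genuinely be absorbed — that the constant so produced is subcritical — is the delicate point; it is exactly what the a priori estimate Lemma~\ref{l1} delivers at the level of (\ref{eq1}), and is the reason the scaling $\tfrac1\varepsilon$ together with hypothesis (H3) is imposed. Granting it, $\mathcal{T}$ is coercive, Theorem~\ref{t1} applies with $X^*=W^{-1,q}(\Omega)\times\big(L^p(\Omega;W^{1,p}_{\mathrm{per}}(Y)/\mathbb{R})\big)^*$, and a solution $(u,u_1)$ of (\ref{eq2}) results. Alternatively — and more in keeping with the paper's organization, which postpones this corollary to the last section — existence for (\ref{eq2}) also follows a posteriori from the homogenization theorem proved there: the solutions $u_\varepsilon$ of (\ref{eq1}) furnished by Corollary~\ref{c1} are bounded in $W^{1,p}_0(\Omega)$ by Lemma~\ref{l1}, and any two‑scale cluster point $(u,u_1)$ of $(u_\varepsilon,D_xu_\varepsilon)$ solves (\ref{eq2}), the passage to the limit in the nonlinear potential term being precisely what Lemma~\ref{l2} is designed to handle.
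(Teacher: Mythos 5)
Your closing observation is in fact the paper's own proof: the corollary is stated with its discussion deliberately postponed, and existence for (\ref{eq2}) is obtained \emph{a posteriori} from the homogenization itself --- solutions $u_\varepsilon$ of (\ref{eq1}) exist by the preceding corollary, are uniformly bounded by Lemma \ref{l1}, and their two-scale limits $(u,u_1)$ satisfy (\ref{eq13})--(\ref{eq14}) by Theorem \ref{t3} (with Lemma \ref{l2} handling the potential term); Section \ref{s8} then complements this by showing that $u_1$ is the unique solution of the cell problem (\ref{eq19}) with $(\theta,\xi)=(u,D_xu)$ and that $u$ solves the macroscopic equation (\ref{eq21}) by ``repeating the pseudomonotone arguments''. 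Your primary argument --- one pseudomonotone operator $\mathcal{T}=\mathcal{T}_1+\mathcal{T}_2$ on the product space $W^{1,p}_0(\Omega)\times L^p(\Omega;W^{1,p}_{per}(Y)/\mathbb{R})$ --- is a genuinely different and more self-contained route, since it never passes through the $\varepsilon$-problems, and its structural parts (the quotient space, Jensen plus Poincar\'e--Wirtinger for the lower bound on $\mathcal{T}_1$, strong continuity of $\mathcal{T}_2$ via Rellich--Kondrachov, Nemytskii continuity of $F$ and $F'$, and weak continuity of $u_1\mapsto\int_Y Vu_1\,dy$) are sound. One mislabeling: the step you call ``precisely Lemma \ref{l2}'' is not that lemma. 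Lemma \ref{l2} concerns two-scale limits of $\varepsilon^{-1}F(u_\varepsilon)$ against oscillating centred test functions and plays no role in the fixed system (\ref{eq2}); your compactness argument (weak $L^q$ convergence of $g_n$ tested uniformly over the $L^p$-precompact unit ball of $W^{1,p}_0(\Omega)$) stands on its own and is closer to the Egoroff argument for $T_2$ in Theorem \ref{t2}.

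The genuine gap in your direct route is the coercivity of $\mathcal{T}$, which you explicitly ``grant''. As written the absorption does not follow: after writing $V=\operatorname{div}_y\theta$ and applying Young's inequality, the leftover term is $C_\delta\|u\|_{L^p(\Omega)}^p$, and the ordinary Poincar\'e inequality only bounds this by $C\|D_xu\|_{L^p(\Omega)}^p$ --- the same homogeneity $p$ as the principal term, with a constant you cannot make small, so coercivity is not established (this is the same soft spot that already affects the coercivity claim for $T$ in Theorem \ref{t2}). The fix is exactly the device of Lemma \ref{l1}, and it should be carried out rather than deferred to: since $2\le p<N$ one can choose $\eta<p$ with $p\le\eta^*=N\eta/(N-\eta)$, whence $\|u\|_{L^p(\Omega)}^p\le C\|Du\|_{L^\eta(\Omega)}^\eta\le C'\|Du\|_{L^p(\Omega)}^\eta$, a strictly lower power $\eta<p$ of the gradient; combined with your inequalities $\langle\mathcal{T}_1(u,u_1),(u,u_1)\rangle\ge c\big(\|D_xu\|_{L^p(\Omega)}^p+\|D_yu_1\|_{L^p(\Omega\times Y)}^p\big)$ and $|\langle\mathcal{T}_2(u,u_1),(u,u_1)\rangle|\le\delta\|D_yu_1\|_{L^p(\Omega\times Y)}^p+C_\delta\|u\|_{L^p(\Omega)}^p$, choosing $\delta$ small gives $\langle\mathcal{T}(u,u_1),(u,u_1)\rangle\ge c'\big(\|D_xu\|_{L^p}^p+\|D_yu_1\|_{L^p}^p\big)-C''\|D_xu\|_{L^p}^\eta$, which is coercive on $X$. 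With that inserted your product-space proof closes; alternatively, the corollary follows, as you note at the end and as the paper intends, directly from Theorem \ref{t3}.
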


\section{An a priori estimate}
The letter $E$ denotes in the sequel an ordinary sequence of
strictly positive real numbers $(0<\varepsilon<1)$ having $0$ as
accumulation point. Since we are interested in the asymptotic
behavior of the solutions to (\ref{eq1}) we need a uniform
(independent of $\varepsilon$) estimate of $u_\varepsilon$.

\begin{lemma}\label{l1}
Let $2\leq p<N$ and assume $a=a(y)$ is strictly positive definite,
bounded and periodic with period $Y=(0,1)^N$ and that $V=V(y)$ is
smooth, periodic and has vanishing mean value on $Y$. Then
$$
\|u_\varepsilon\|_{W^{1,p}_0(\Omega)}\leq C.
$$
\end{lemma}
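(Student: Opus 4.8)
The plan is to obtain the estimate from the energy identity produced by testing the weak form of (\ref{eq1}) with $v=u_\varepsilon$ itself,
$$\int_\Omega a(\frac{x}{\varepsilon})|D u_\varepsilon|^{p-2}D u_\varepsilon\cdot D u_\varepsilon\,dx+\frac{1}{\varepsilon}\int_\Omega V(\frac{x}{\varepsilon})|u_\varepsilon|^{p}\,dx=\langle f,u_\varepsilon\rangle .$$
By the strict positive definiteness (H1), the first term is bounded below by $M\|D u_\varepsilon\|_{L^p(\Omega)}^p$, which is comparable to $\|u_\varepsilon\|_{W^{1,p}_0(\Omega)}^p$ by Poincar\'e's inequality, while the right-hand side is at most $\|f\|_{W^{-1,q}(\Omega)}\|u_\varepsilon\|_{W^{1,p}_0(\Omega)}$. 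Everything therefore reduces to controlling the middle term, which a priori carries the dangerous factor $\frac{1}{\varepsilon}$.

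To remove that factor I would exploit the centring hypothesis (H3). Since $V$ is smooth, $Y$-periodic and has zero mean, the Poisson cell problem $-\Delta_y\theta=V$ on $Y$ has a smooth $Y$-periodic solution; putting $W:=-D_y\theta$ yields a smooth $Y$-periodic vector field with $\text{div}_y W=V$, so that $\frac{1}{\varepsilon}V(\frac{x}{\varepsilon})=\text{div}_x(W(\frac{x}{\varepsilon}))$. Since $|u_\varepsilon|^{p-2}u_\varepsilon\in L^{q}(\Omega)$ and $|u_\varepsilon|^{p}\in W^{1,1}_0(\Omega)$ with $D_x(|u_\varepsilon|^{p})=p\,|u_\varepsilon|^{p-2}u_\varepsilon\,D_x u_\varepsilon$, an integration by parts --- whose boundary term vanishes because $u_\varepsilon=0$ on $\partial\Omega$ --- turns the middle term into
$$\frac{1}{\varepsilon}\int_\Omega V(\frac{x}{\varepsilon})|u_\varepsilon|^{p}\,dx=-p\int_\Omega |u_\varepsilon|^{p-2}u_\varepsilon\,W(\frac{x}{\varepsilon})\cdot D_x u_\varepsilon\,dx .$$
H\"older's inequality with the conjugate exponents $q$ and $p$ bounds the modulus of the right-hand side by $p\,\|W\|_{L^\infty}\,\|u_\varepsilon\|_{L^p(\Omega)}^{p-1}\,\|D_x u_\varepsilon\|_{L^p(\Omega)}$, and Young's inequality splits this as $\eta\,\|D_x u_\varepsilon\|_{L^p(\Omega)}^p+C_\eta\,\|u_\varepsilon\|_{L^p(\Omega)}^p$ for any $\eta>0$.

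Inserting this into the energy identity, picking $\eta$ small to absorb $\eta\|D_x u_\varepsilon\|_{L^p(\Omega)}^p$ into the coercive term, and estimating the leftover $C_\eta\|u_\varepsilon\|_{L^p(\Omega)}^p$ together with the term involving $f$ by Poincar\'e, one reaches an inequality of the shape $c\,\|u_\varepsilon\|_{W^{1,p}_0(\Omega)}^{p}\le C\,\|u_\varepsilon\|_{W^{1,p}_0(\Omega)}$ with $c>0$ and $C$ independent of $\varepsilon$ --- the periodicity of $a$ and $V$ being exactly what keeps $M$, $\|W\|_{L^\infty}$ and the Poincar\'e constant of $\Omega$ free of $\varepsilon$ --- whence $\|u_\varepsilon\|_{W^{1,p}_0(\Omega)}\le C$.

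The step I expect to be the real obstacle is precisely this absorption: after the integration by parts the potential contribution is of the same order $\|D_x u_\varepsilon\|_{L^p(\Omega)}^p$ as the ellipticity term, so the Young-and-Poincar\'e argument closes only if the constant it produces --- governed by $\|W\|_{L^\infty}$ and the Poincar\'e constant of $\Omega$ --- is strictly smaller than $M$; checking this, by optimising the choice of the potential $W$ and of $\eta$, is the heart of the matter, and it is here that the zero-mean condition on $V$ is indispensable, since without it the factor $\frac{1}{\varepsilon}$ cannot be integrated away at all.
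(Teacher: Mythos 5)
Up to the decisive step your argument is the paper's own: you solve the cell problem $\Delta_y\Phi=V$ (possible precisely because $V$ is smooth, periodic and has zero mean), set $W=D_y\Phi$ so that $\tfrac{1}{\varepsilon}V(\tfrac{x}{\varepsilon})=\mathrm{div}_x\bigl(W(\tfrac{x}{\varepsilon})\bigr)$, test with $u_\varepsilon$, integrate by parts, and bound the potential contribution by $C\int_\Omega|u_\varepsilon|^{p-1}|Du_\varepsilon|\,dx$, which you then split by Young. The gap is the closing step, and you have named it yourself without resolving it: after Young the leftover term $C_\eta\|u_\varepsilon\|_{L^p(\Omega)}^p$ becomes, via Poincar\'e, $C_\eta C_P^{p}\|Du_\varepsilon\|_{L^p(\Omega)}^p$, which is of exactly the same order $p$ as the coercive term $M\|Du_\varepsilon\|_{L^p(\Omega)}^p$, and its constant cannot be made small: shrinking your Young parameter blows up $C_\eta$, the Poincar\'e constant is fixed by $\Omega$, and $W$ is determined by $V$ up to a divergence-free field, so there is nothing left to ``optimise''. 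The scheme therefore closes only under an implicit smallness hypothesis of the type $p\,\|W\|_{L^\infty}C_P^{p-1}<M$, which is not among the assumptions of the lemma; as written, the proposal does not prove the statement.

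The paper closes the estimate by a different device, and this is the idea your proposal is missing: it exploits the strict subcriticality $p<N$. One chooses $\eta=p-\alpha<p$ with $\alpha>0$ small enough that $p\le\eta^*=\tfrac{N\eta}{N-\eta}$, and controls $\int_\Omega|u_\varepsilon|^p\,dx$ through the Sobolev--Poincar\'e inequality at the \emph{lower} exponent $\eta$ rather than through Poincar\'e at exponent $p$. After absorbing only the harmless $\tfrac{\delta}{p}\int_\Omega|Du_\varepsilon|^p\,dx$ piece of Young into the ellipticity, every remaining term on the right-hand side is then of power at most $\eta<p$ (or $1$, coming from $f$) in $\|Du_\varepsilon\|_{L^p(\Omega)}$, so dividing by $\|Du_\varepsilon\|_{L^p(\Omega)}^{\eta}$ yields $\|u_\varepsilon\|_{W^{1,p}_0(\Omega)}^{p-\eta}\le C$ with no smallness condition at all. (A caveat if you pursue this: the paper uses the inequality in the non-homogeneous form $\int_\Omega|u_\varepsilon|^p\,dx\le C\int_\Omega|Du_\varepsilon|^\eta\,dx$, whereas the scale-invariant Sobolev--Poincar\'e statement is $\|u_\varepsilon\|_{L^p(\Omega)}\le C\|Du_\varepsilon\|_{L^\eta(\Omega)}$, so the exponent bookkeeping at that point deserves care.) In any case, the structural point is to break the $p$-homogeneity of the problematic term by lowering the integrability exponent, instead of hoping the absorption constant is small; without some such substitute, the Young-plus-Poincar\'e route you describe genuinely fails for large potentials.
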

\begin{proof}
Since  $V$ is smooth and we have $\int_Y V(y)\,dy=0$, there exists a
function $\Phi$ which is smooth and Y-periodic such that
\begin{equation*}
\left\{\begin{aligned}&\Delta_y \Phi(y)=V(y)\\&\Phi\quad
Y-\text{periodic}.
\end{aligned}\right.
\end{equation*}
Let us define $G(y)=D_y\Phi(y)$ and thus $\text{div}_y G(y)=V(y)$.
We observe that $|G|\leq C$ on bounded sets. Recalling
$F(z)=|z|^{p-2}z$, we can now write (\ref{eq1}) as
\begin{equation} \label{eq5}
\left\{\begin{aligned}&
-\text{div}\left(a(\frac{x}{\varepsilon})|Du_\varepsilon|^{p-2}Du_\varepsilon\right)
+\text{div}\left[G(\frac{x}{\varepsilon})\right]F(u_\varepsilon)=f\quad
\text{in }\quad
\Omega\\&u_\varepsilon=0\qquad\qquad\qquad\qquad\qquad\qquad\qquad\qquad\quad\quad\text{
on }
\partial\Omega,
\end{aligned}\right.
\end{equation}
Multiply the first equation in (\ref{eq5}) by $u_\varepsilon$ and
integrate over $\Omega$
$$
\int_\Omega
a(\frac{x}{\varepsilon})|Du_\varepsilon|^{p}\,dx=\int_\Omega
G(\frac{x}{\varepsilon})F'(u_\varepsilon)\cdot u_\varepsilon
Du_\varepsilon\,dx+\int_\Omega
G(\frac{x}{\varepsilon})F(u_\varepsilon)\cdot
Du_\varepsilon\,dx+\int_\Omega fu_\varepsilon\,dx.
$$
The uniform ellipticity of $a$ and the boundedness of $G$ yield
$$
M\int_\Omega |Du_\varepsilon|^{p}\,dx\leq C_1 \int_\Omega
|F'(u_\varepsilon)||Du_\varepsilon||u_\varepsilon|\,dx+C_2\int_\Omega
|F(u_\varepsilon)||Du_\varepsilon|\,dx+\|f\|_{W^{-1,q}(\Omega)}\|u_\varepsilon\|_{W^{1,p}_0(\Omega)}.
$$
We have
$$
|F(u_\varepsilon)|=|u_\varepsilon|^{p-1}\qquad\text{and}\qquad
|F'(u_\varepsilon)|\leq C|u_\varepsilon|^{p-2}.
$$
Therefore the first and second terms on the right-hand side can be
estimated as

$$C_1 \int_\Omega
|F'(u_\varepsilon)||Du_\varepsilon||u_\varepsilon|\,dx+C_2\int_\Omega
|F(u_\varepsilon)||Du_\varepsilon|\,dx\leq
C_3\int_\Omega|Du_\varepsilon||u_\varepsilon|^{p-1}\,dx.
$$
Let us now estimate
$$\int_\Omega|Du_\varepsilon||u_\varepsilon|^{p-1}\,dx.$$
Let $\delta>0$. By the Young inequality we have
$$
\int_\Omega|Du_\varepsilon||u_\varepsilon|^{p-1}\,dx\leq\frac{(\delta
p)^{-q/p}}{q}\int_\Omega
|u_\varepsilon|^p\,dx+\frac{\delta}{p}\int_\Omega|Du_\varepsilon|^p\,dx.
$$
For any integer $N\geq 3$ and any real number $2\leq p<N$, there
exists a real number $1<\eta<p$ such that $p\in[1,\eta^*]$, with
$\eta^*=\frac{N\eta}{N-\eta}$. For our purposes it is enough to show
that there exists $1<\eta<p$ such that for any given $N\geq 3$ and
any real number $2\leq p<N$ we have
$$
p\leq\frac{N\eta}{N-\eta}.
$$
This is equivalent to
$$
N\leq\frac{p\eta}{p-\eta}
$$
By putting $\eta=p-\alpha,\ \alpha>0$ we get
$$
N\leq\frac{p^2-p\alpha}{\alpha}.
$$
Choosing $\alpha>0$ small enough this inequality is valid for
arbitrary large integer $N\geq 3$ and any real number $2\leq p<N$.
Now, for $u_\varepsilon\in W^{1,\eta}_0(\Omega)$, the Poincareґ
inequality yields
$$
\int_\Omega |u_\varepsilon|^p\,dx\leq C_4\int_\Omega
|Du_\varepsilon|^\eta\,dx
$$
for $p\in[1,\eta^*]$, with $\eta^*=\frac{N\eta}{N-\eta}$. If we
chose $\delta$ so small that $\frac{C_3\delta}{p}<M$ we get
$$
\left(M-\frac{C_3\delta}{p}\right)\int_\Omega
|Du_\varepsilon|^p\,dx\leq C_4\frac{(\delta p)^{-q/p}}{q}\int_\Omega
|Du_\varepsilon|^\eta\,dx+C_5\|u_\varepsilon\|_{W^{1,p}_0(\Omega)}.
$$
If $\|u_\varepsilon\|_{W^{1,p}_0(\Omega)}<1$ we are done. Otherwise
we divide by $\|Du_\varepsilon\|^\eta$ to obtain
$$
\|u_\varepsilon\|^\alpha_{W^{1,p}_0(\Omega)}\leq C\qquad\text{for
}\quad \alpha=p-\eta>0.
$$
\end{proof}

\section{Two-scale convergence}\label{s5}
For the sake of completeness we summarize here the most important
facts. As a survey on the topic we refer to \cite{LNW}. We let
$\mathbb{T}^N$ denote the $N$-dimensional unit torus in
$\mathbb{R}^N$ and identify in the usual way $Y$-periodic functions
by those that are defined on $\mathbb{T}^N$ and introduce functions
$\varphi=\varphi(x,y)$ of Caratheodory type and consider their
traces $\varphi(x,\frac{x}{\varepsilon}),\ x\in\Omega$. A crucial
step is to construct an admissible class of test functions $\varphi$
such that for any bounded sequence
$\{u_\varepsilon\}_{\varepsilon\in E}\in L^2(\Omega)$ we have the
weak two-scale convergence
$$
\int_\Omega
u_\varepsilon(x)\varphi(x,\frac{x}{\varepsilon})\,dx\to\iint_{\Omega\times
Y}u(x,y)\varphi(x,y)\,dydx
$$
as $E'\ni\varepsilon\to 0$, $E'$ being a subsequence of $E$. Here
$u\in L^p(\Omega; L^p(Y))$. It turns out that
$L^q(\Omega;\mathcal{C}_{per}(Y))$ is the appropriate class of test
functions for which this works. If $u_\varepsilon$ is in addition
bounded in $W^{1,p}(\Omega)$, then we have the following convergence
of the gradients as $E'\ni\varepsilon\to 0$:
\begin{equation}\label{eq6}
\int_\Omega D_x
u_\varepsilon(x)\psi(x,\frac{x}{\varepsilon})\,dx\to\iint_{\Omega\times
Y}[D_xu(x)+D_y u_1(x,y)]\cdot\psi(x,y)\,dydx,
\end{equation}
where $u\in W^{1,p}(\Omega)$ and $u_1\in
L^p(\Omega;W^{1,p}_{per}(Y))$ for every $\psi\in
L^q(\Omega;\mathcal{C}_{per}(Y)^N)$. If in addition, the test
functions $\varphi\in L^q(\Omega;\mathcal{C}_{per}(Y))$ are chosen
to satisfies the centring condition
$$
\int_Y \varphi (\cdot,y)\,dy=0
$$
in the $y$-variable, then
\begin{equation}\label{eq7}
\int_\Omega
\frac{u_\varepsilon(x)}{\varepsilon}\varphi(x,\frac{x}{\varepsilon})\,dx\to\iint_{\Omega\times
Y}u_1(x,y)\varphi(x,y)\,dydx
\end{equation}
as $E'\ni\varepsilon\to 0$, see eg \cite{Douanla1, GW} for the proof
and \cite{Douanla2} for the corresponding result on periodic
surfaces.

\section{A convergence result}
The aim of this section is to prove a convergence result for
$$
\int_\Omega
\frac{F(u_\varepsilon(x))}{\varepsilon}\varphi(x,\frac{x}{\varepsilon})\,dx.
$$
AS we will see, it is essential that the test function $\varphi$
satisfies a centring condition in the local variable $y$, i.e. that
$$
\int_Y \varphi (\cdot,y)\,dy=0.
$$
The result of Lemma \ref{l2} will be crucial in our proof of the
homogenization of (\ref{eq1}) for the case $p>2$.

\begin{lemma}\label{l2}
The function $F$ is defined as above and $2<p<N$. Suppose that
$(u_\varepsilon)_{\varepsilon\in E}$ is bounded in
$W^{1,p}(\Omega)$, then there exist a subsequence $E'$ of $E$ and a
couple $(u,u_1)\in W^{1,p}(\Omega)\times
L^p(\Omega;W^{1,p}_{per}(Y))$ such that
$$
\int_\Omega
\frac{F(u_\varepsilon(x))}{\varepsilon}\varphi(x,\frac{x}{\varepsilon})\,dx\to\iint_{\Omega\times
Y}F'(u)u_1(x,y)\varphi(x,y)\,dydx
$$
as $E'\ni\varepsilon\to 0$, for all $\varphi=\varphi_1\varphi_2\in
\mathcal{C}^\infty_0(\Omega)\otimes \mathcal{C}^\infty_{per}(Y)$
with $\int_Y\varphi_2(y)\,dy=0$.
\end{lemma}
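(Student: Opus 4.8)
The plan is to exploit the centring condition $\int_Y \varphi_2(y)\,dy=0$ to build an antiderivative in $y$, just as in the proof of Lemma~\ref{l1}. Precisely, since $\varphi_2\in\mathcal C^\infty_{per}(Y)$ has mean value zero, there is a smooth $Y$-periodic vector field $\chi(y)$ with $\operatorname{div}_y\chi(y)=\varphi_2(y)$ (solve $\Delta_y\Psi=\varphi_2$ and set $\chi=D_y\Psi$). Then, writing $\varphi(x,y)=\varphi_1(x)\varphi_2(y)$ and noting $\varphi_2(\tfrac{x}{\varepsilon})=\varepsilon\,\operatorname{div}_x\!\big(\chi(\tfrac{x}{\varepsilon})\big)$, an integration by parts converts the singular integral into
\begin{equation*}
\int_\Omega \frac{F(u_\varepsilon)}{\varepsilon}\varphi_1(x)\varphi_2(\tfrac{x}{\varepsilon})\,dx
= -\int_\Omega \chi(\tfrac{x}{\varepsilon})\cdot D_x\!\big(F(u_\varepsilon)\varphi_1\big)\,dx
= -\int_\Omega \chi(\tfrac{x}{\varepsilon})\cdot\big(F'(u_\varepsilon)D_xu_\varepsilon\,\varphi_1 + F(u_\varepsilon)D_x\varphi_1\big)\,dx,
\end{equation*}
which no longer carries a $1/\varepsilon$ factor, at the cost of introducing the nonlinear product $F'(u_\varepsilon)D_xu_\varepsilon$.

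Next I would pass to the limit in each of the two resulting terms. First, by the a priori bound $u_\varepsilon$ is bounded in $W^{1,p}(\Omega)$, so up to a subsequence $E'$ we have $u_\varepsilon\rightharpoonup u$ in $W^{1,p}(\Omega)$, hence $u_\varepsilon\to u$ strongly in $L^p(\Omega)$ by Rellich--Kondrachov, and the two-scale gradient relation \eqref{eq6} supplies $u_1\in L^p(\Omega;W^{1,p}_{per}(Y))$ with $D_xu_\varepsilon\to D_xu+D_yu_1$ two-scale. For the second term, $F(u_\varepsilon)\to F(u)$ strongly in $L^q(\Omega)$ (continuity of the Nemytskii map $L^p\to L^q$, using strong $L^p$ convergence) while $\chi(\tfrac{x}{\varepsilon})\cdot D_x\varphi_1\rightharpoonup\big(\int_Y\chi\,dy\big)\cdot D_x\varphi_1$ weakly in $L^p$; but $\int_Y\chi\,dy=\int_Y D_y\Psi\,dy=0$ by periodicity, so this entire term vanishes in the limit. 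The surviving contribution is $-\int_\Omega \chi(\tfrac{x}{\varepsilon})\cdot F'(u_\varepsilon)D_xu_\varepsilon\,\varphi_1\,dx$.

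The main obstacle is precisely this surviving term: I need to combine the two-scale weak convergence of $D_xu_\varepsilon$ with the \emph{nonlinear} factor $F'(u_\varepsilon)$ and the oscillating factor $\chi(\tfrac{x}{\varepsilon})$. Here is where $p>2$ matters and where the ``Fredholm alternative'' idea referred to in the introduction enters. The strategy is to show $F'(u_\varepsilon)\,\varphi_1(x)\,\chi(\tfrac{x}{\varepsilon})$ is an admissible two-scale test field converging \emph{strongly} two-scale to $F'(u)\varphi_1(x)\chi(y)$; since $F'(u_\varepsilon)=(p-1)|u_\varepsilon|^{p-2}$ with $p-2>0$, and $u_\varepsilon\to u$ in $L^p$, the Nemytskii map $v\mapsto|v|^{p-2}$ sends $L^p\to L^{p/(p-2)}$ continuously, so $F'(u_\varepsilon)\to F'(u)$ strongly in $L^{p/(p-2)}(\Omega)$; tensoring with the smooth periodic $\chi$ and the cutoff $\varphi_1$ gives a sequence that is strongly two-scale convergent and lies in a space dual-compatible with the $L^p$ two-scale convergence of $D_xu_\varepsilon$ (one checks $\tfrac{p-2}{p}+\tfrac{1}{p}\le 1$, i.e. $p\ge\tfrac{p}{p-1}$, valid for $p\ge 2$, with room to spare). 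Pairing the strongly two-scale convergent factor against the weakly two-scale convergent $D_xu_\varepsilon$ then yields
\begin{equation*}
-\int_\Omega \chi(\tfrac{x}{\varepsilon})\cdot F'(u_\varepsilon)D_xu_\varepsilon\,\varphi_1\,dx
\longrightarrow -\iint_{\Omega\times Y} F'(u(x))\,\varphi_1(x)\,\chi(y)\cdot\big(D_xu(x)+D_yu_1(x,y)\big)\,dy\,dx.
\end{equation*}
The last step is to undo the integration by parts at the limit level: integrating by parts in $y$ and using $\operatorname{div}_y\chi=\varphi_2$ and the $Y$-periodicity (so the $D_xu$ part, being $y$-independent, pairs with $\int_Y\operatorname{div}_y\chi\,dy=\int_Y\varphi_2=0$ inside the inner integral up to the $D_y$ acting only on $u_1$), one arrives at
\begin{equation*}
-\iint_{\Omega\times Y} F'(u)\varphi_1(x)\,\chi(y)\cdot D_yu_1(x,y)\,dy\,dx
= \iint_{\Omega\times Y} F'(u)\,u_1(x,y)\,\varphi_1(x)\,\varphi_2(y)\,dy\,dx,
\end{equation*}
which is the claimed limit. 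The delicate point throughout is justifying that the product of a strongly two-scale convergent sequence and a weakly two-scale convergent sequence passes to the product of limits; I would isolate this as an auxiliary observation (it is the nonlinear analogue of the standard ``oscillating test function'' lemma) and verify the Hölder-exponent bookkeeping $p>2$ guarantees.
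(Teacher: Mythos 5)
Your proposal is correct and follows essentially the same route as the paper's proof: solve $\Delta_y\Phi=\varphi_2$ to get a periodic vector potential with divergence $\varphi_2$, integrate by parts to absorb the $1/\varepsilon$, use strong $L^q$-type convergence of $F'(u_\varepsilon)$ (you via Nemytskii continuity of $v\mapsto|v|^{p-2}$, the paper via explicit H\"older/Lipschitz estimates on $F'$ for $2<p<3$ and $3\le p<N$) together with the two-scale convergence of gradients, and then integrate back by parts in $y$ using $\operatorname{div}_y\chi=\varphi_2$ and $\int_Y\chi\,dy=0$. The only cosmetic difference is that you dispose of the $F(u_\varepsilon)D\varphi_1$ term before passing to the limit, whereas the paper lets it vanish at the two-scale limit; the substance is identical.
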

\begin{proof}
It is well-known that there exists a smooth $Y$-periodic solution
$\Phi$ to
\begin{equation*}
\left\{
\begin{aligned}
&\Delta_y \Phi(y)=\varphi_2(y),\\
&\Phi\ \ \ Y\text{-periodic}.
\end{aligned}
\right.
\end{equation*}
Let us define $\psi(y)=D_y\Phi(y)$ and thus
div$_y\psi(y)=\varphi_2(y)$. Then we can write
\begin{eqnarray*}
&&\int_\Omega
\frac{F(u_\varepsilon(x))}{\varepsilon}\varphi_1(x)\varphi_2(\frac{x}{\varepsilon})\,dx\\&&\qquad\quad=
 \int_\Omega F(u_\varepsilon(x))\varphi_1(x)\text{div}\left[\psi(\frac{x}{\varepsilon})\right]dx=-
 \int_\Omega D[F(u_\varepsilon(x))\varphi_1(x)]\cdot\psi(\frac{x}{\varepsilon})\,dx
 \\
   &&\qquad\quad=-\int_\Omega F'(u_\varepsilon(x))Du_\varepsilon\varphi_1(x)\cdot\psi(\frac{x}{\varepsilon})\,dx-
   \int_\Omega F(u_\varepsilon(x))D\varphi_1(x)\cdot\psi(\frac{x}{\varepsilon})\,dx.  \\
  \end{eqnarray*}
In order to pass to the limit in the first term, we need to prove
continuity with respect to $u$ for $F'(u)$. To this end we recall
that for $1<r< 2$ and for any real number $a,b$ it holds true that
$$
||a|^{r-2}a-|b|^{r-2}b|\leq C|a-b|^{r-1}.
$$
On letting $r=p-1$ for $2<p< 3$, using the definition of $F$ and the
above inequality, we conclude that $F'$ is H\"{o}lder continuous:
$$
|F'(u_1)-F'(u_2)|\leq C |u_1-u_2|^{p-2}.
$$
Integrating over $\Omega$ and using the H\"{o}lder inequality we
obtain for any $\varepsilon>0$,
\begin{equation}\label{eq8}
\|F'(u)-F'(u_\varepsilon)\|_{L^q(\Omega)}\leq C
\|u-u_\varepsilon\|^{p-2}_{L^p(\Omega)}.
\end{equation}
Likewise, For $r\geq 2$ and for real numbers $a,b$ it holds true
that
$$
||a|^{r-2}a-|b|^{r-2}b|\leq C(|a|+|b|)^{r-2}|a-b|.
$$
For $r=p-1$ with $3\leq p<N$, using the definition of $F$ and the
above inequality, we conclude
$$
|F'(u_1)-F'(u_2)|\leq C(|u_1|+|u_2|)^{p-3}|u_1-u_2|,
$$
so that $F'$ is Lipschitz continuous. Integrating over $\Omega$ and
using the H\"{o}lder inequality, we get for any $\varepsilon>0$
\begin{equation}\label{eq9}
\|F'(u)-F'(u_\varepsilon)\|_{L^q(\Omega)}\leq
C\|u-u_\varepsilon\|_{L^p(\Omega)},
\end{equation}
where we used the boundedness of the sequence
$(u_\varepsilon)_{\varepsilon\in E}$ in $L^p(\Omega)$. By the
Rellich-Kondrachov theorem, $u_\varepsilon$ converges strongly in
$L^p(\Omega)$ as $E'\ni \varepsilon\to 0$. Using now
(\ref{eq8})(\ref{eq9}) we conclude that $F'(u_\varepsilon)$
converges strongly in $L^q(\Omega)$ as $E'\ni \varepsilon\to 0$.
Combining this with the continuity and monotonicity properties of F
and the two-scale convergence property (\ref{eq6}) for gradients and
pass to the two-scale limit in both terms in
$$
-\int_\Omega
F'(u_\varepsilon(x))Du_\varepsilon\varphi_1(x)\cdot\psi(\frac{x}{\varepsilon})\,dx-
   \int_\Omega
   F(u_\varepsilon(x))D\varphi_1(x)\cdot\psi(\frac{x}{\varepsilon})\,dx,
$$
and obtain
\begin{eqnarray*}
  &&\lim_{E'\ni\varepsilon\to 0}\int_\Omega
\frac{F(u_\varepsilon(x))}{\varepsilon}\varphi_1(x)\varphi_2(\frac{x}{\varepsilon})\,dx\\&&\qquad=
-\int_{\Omega}\int_{Y} F'(u)[D_x u+D_y
u_1]\varphi_1(x)\cdot\psi(y)\,dydx- \int_{\Omega}\int_{Y} F(u)D_x
\varphi_1(x)\cdot\psi(y)\,dydx\\
&&\qquad=-\int_{\Omega}\int_{Y}F'(u)D_y
u_1\varphi_1(x)\cdot\psi(y)\,dxdy=\int_{\Omega}\int_{Y}F'(u)(x)u_1(x,y)\varphi_1(x)\text{div}_y\psi(y)\,dydx\\
&&\qquad=\int_{\Omega}\int_{Y}F'(u)(x)u_1(x,y)\varphi_1(x)\varphi_2(y)\,dydx.
\end{eqnarray*}
\end{proof}
\begin{remark}
Lemma \ref{l2} remains valid for more general functions $F$. For
instance we can choose $F$ to be continuous with H\"{o}lder
continuous derivative $F'$. For example $F(u)=|u|^p$, $1<p<1$.
\end{remark}

\section{The main theorem}
Let us recall the space $W^{1,p}_\#(Y)=\{v\in W^{1,p}_{per}(y):
\int_Y v(y)\,dy=0\}$. We can now state and prove the main
homogenization theorem for (\ref{eq1}).
\begin{theorem}\label{t3}
Under the assumption that $2\leq p<N$, there exist a subsequence
$E'$ of $E$ and functions $u\in W^{1,p}(\Omega)$ and $u_1\in
W^{1,p}_\#(Y)$ such that as $E'\ni \varepsilon\to 0$ the solutions
$u_\varepsilon\in W^{1,p}_0(\Omega)$ to (\ref{eq1}) satisfy:
$$
u_\varepsilon\to u \quad\text{in }\quad L^p(\Omega),
$$
$$
D_x u_\varepsilon\to D_x u+D_y u_1\quad\text{in }\quad
L^p(\Omega)\quad \text{(two-scale weakly),}
$$
where $(u, u_1)$ satisfies the two-scale homogenized system
(\ref{eq2}).
\end{theorem}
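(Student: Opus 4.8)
The plan is to combine the a priori bound of Lemma~\ref{l1} with the two-scale compactness properties of Section~\ref{s5}, pass to the limit in the weak formulation of (\ref{eq1}) tested against oscillating functions --- using Lemma~\ref{l2} for the nonlinear potential term when $p>2$ --- and then identify the limit flux by a Minty-type monotonicity argument, the crucial input for which is a convergence of energies resting on the centring condition (H3). By Lemma~\ref{l1}, $(u_\varepsilon)$ is bounded in $W^{1,p}_0(\Omega)$, so along a subsequence $E'$ we have $u_\varepsilon\rightharpoonup u$ in $W^{1,p}_0(\Omega)$, hence $u_\varepsilon\to u$ in $L^p(\Omega)$ by Rellich--Kondrachov, and by (\ref{eq6}) there is $u_1\in L^p(\Omega;W^{1,p}_{per}(Y))$ --- normalised so that $u_1(x,\cdot)\in W^{1,p}_\#(Y)$ --- with $D_xu_\varepsilon$ two-scale converging weakly to $w:=D_xu+D_yu_1$. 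Since $\||Du_\varepsilon|^{p-2}Du_\varepsilon\|^q_{L^q(\Omega)}=\|Du_\varepsilon\|^p_{L^p(\Omega)}$ is bounded, after a further extraction $a(\tfrac{x}{\varepsilon})|Du_\varepsilon|^{p-2}Du_\varepsilon$ two-scale converges weakly in $L^q(\Omega)$ to some $\sigma\in L^q(\Omega\times Y)^N$. This proves the two stated convergences; it remains to check that $(u,u_1)$ solves (\ref{eq2}).

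Fix $v\in\mathcal{C}^\infty_0(\Omega)$ and $v_1\in\mathcal{C}^\infty_0(\Omega)\otimes\mathcal{C}^\infty_{per}(Y)$ and test (\ref{eq1}) with $\Phi_\varepsilon(x)=v(x)+\varepsilon v_1(x,\tfrac{x}{\varepsilon})$, so that $D\Phi_\varepsilon(x)=D_xv(x)+D_yv_1(x,\tfrac{x}{\varepsilon})+\varepsilon D_xv_1(x,\tfrac{x}{\varepsilon})$ is an admissible test field converging to $D_xv+D_yv_1$. The diffusion term then tends to $\iint_{\Omega\times Y}\sigma\cdot(D_xv+D_yv_1)\,dy\,dx$ by the two-scale convergence of the flux. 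The potential term equals $\tfrac1\varepsilon\int_\Omega V(\tfrac{x}{\varepsilon})F(u_\varepsilon)v\,dx+\int_\Omega V(\tfrac{x}{\varepsilon})F(u_\varepsilon)v_1(x,\tfrac{x}{\varepsilon})\,dx$; since $\int_YV=0$ by (H3), Lemma~\ref{l2} with $\varphi_1=v$, $\varphi_2=V$ sends the first piece to $\iint_{\Omega\times Y}V(y)F'(u)u_1v\,dy\,dx$, while $F(u_\varepsilon)\to F(u)$ strongly in $L^q(\Omega)$ (Rellich--Kondrachov and continuity of $F$) sends the second piece to $\iint_{\Omega\times Y}V(y)F(u)v_1\,dy\,dx$. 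As the right-hand side tends to $\int_\Omega fv\,dx$, we obtain
\begin{equation}\label{e-lim}
\begin{aligned}
&\iint_{\Omega\times Y}\sigma\cdot(D_xv+D_yv_1)\,dy\,dx+\iint_{\Omega\times Y}V(y)F'(u)u_1v\,dy\,dx\\
&\qquad\qquad+\iint_{\Omega\times Y}V(y)F(u)v_1\,dy\,dx=\int_\Omega fv\,dx,
\end{aligned}
\end{equation}
which, by density, holds for all $v\in W^{1,p}_0(\Omega)$ and $v_1\in L^p(\Omega;W^{1,p}_{per}(Y))$.

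The heart of the matter is the convergence of the energies. Testing (\ref{eq1}) with $u_\varepsilon$ gives $\int_\Omega a(\tfrac{x}{\varepsilon})|Du_\varepsilon|^p\,dx+\tfrac1\varepsilon\int_\Omega V(\tfrac{x}{\varepsilon})|u_\varepsilon|^p\,dx=\int_\Omega fu_\varepsilon\,dx\to\int_\Omega fu\,dx$. For the second term I argue exactly as in the proof of Lemma~\ref{l2}, but with $F$ replaced by $t\mapsto|t|^p$ and $\varphi=V$: writing $V=\text{div}_yG$ with $G=D_y\Phi$, $\Delta_y\Phi=V$, and integrating by parts --- no boundary term appears, since $u_\varepsilon$ vanishes on $\partial\Omega$, so the compact support of the $x$-factor used in Lemma~\ref{l2} is not needed --- then using $\int_YG=0$ and integrating by parts in $y$, one arrives at $\tfrac1\varepsilon\int_\Omega V(\tfrac{x}{\varepsilon})|u_\varepsilon|^p\,dx\to p\iint_{\Omega\times Y}V(y)F(u)u_1\,dy\,dx$. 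Hence $\int_\Omega a(\tfrac{x}{\varepsilon})|Du_\varepsilon|^p\,dx\to\int_\Omega fu\,dx-p\iint_{\Omega\times Y}V(y)F(u)u_1\,dy\,dx$; taking $v=u$, $v_1=u_1$ in (\ref{e-lim}) and using $F'(u)u=(p-1)F(u)$, the right-hand side is recognised as $\iint_{\Omega\times Y}\sigma\cdot(D_xu+D_yu_1)\,dy\,dx$, so
\begin{equation}\label{e-en}
\int_\Omega a(\tfrac{x}{\varepsilon})|Du_\varepsilon|^p\,dx\longrightarrow\iint_{\Omega\times Y}\sigma\cdot(D_xu+D_yu_1)\,dy\,dx.
\end{equation}

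Finally, for smooth $v,v_1$ put $\Psi^\varepsilon(x)=D_xv(x)+D_yv_1(x,\tfrac{x}{\varepsilon})$ and $\Psi(x,y)=D_xv(x)+D_yv_1(x,y)$. By the monotonicity of $\xi\mapsto a(y)|\xi|^{p-2}\xi$ (cf. the treatment of $T_1$ in Section~\ref{s2}),
$$0\le\int_\Omega a(\tfrac{x}{\varepsilon})\big(|Du_\varepsilon|^{p-2}Du_\varepsilon-|\Psi^\varepsilon|^{p-2}\Psi^\varepsilon\big)\cdot\big(Du_\varepsilon-\Psi^\varepsilon\big)\,dx.$$
Expanding and passing to the limit --- the $\int a(\tfrac{x}{\varepsilon})|Du_\varepsilon|^p$ term by (\ref{e-en}); the mixed term $\int a(\tfrac{x}{\varepsilon})|Du_\varepsilon|^{p-2}Du_\varepsilon\cdot\Psi^\varepsilon$ by two-scale convergence of the flux against $\Psi$; the mixed term $\int a(\tfrac{x}{\varepsilon})|\Psi^\varepsilon|^{p-2}\Psi^\varepsilon\cdot Du_\varepsilon$ by two-scale convergence of $Du_\varepsilon$ against the smooth oscillating field $a(\tfrac{x}{\varepsilon})|\Psi^\varepsilon|^{p-2}\Psi^\varepsilon$, whose two-scale limit is $a(y)|\Psi|^{p-2}\Psi$; and the last term by the mean-value property --- gives
$$0\le\iint_{\Omega\times Y}\big(\sigma-a(y)|\Psi|^{p-2}\Psi\big)\cdot\big(w-\Psi\big)\,dy\,dx.$$
Letting $\mathcal{V}$ be the closure in $L^p(\Omega\times Y)^N$ of the set of fields $D_xv+D_yv_1$ with $v,v_1$ smooth, this inequality persists for all $\Psi\in\mathcal{V}$, and $w\in\mathcal{V}$; taking $\Psi=w-t\Phi$ with $\Phi\in\mathcal{V}$, $t>0$, dividing by $t$ and letting $t\to0^+$ yields $\iint_{\Omega\times Y}(\sigma-a(y)|w|^{p-2}w)\cdot\Phi\,dy\,dx=0$ for every $\Phi\in\mathcal{V}$. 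In particular $\iint_{\Omega\times Y}\sigma\cdot(D_xv+D_yv_1)\,dy\,dx=\iint_{\Omega\times Y}a(y)|w|^{p-2}w\cdot(D_xv+D_yv_1)\,dy\,dx$ for all admissible $v,v_1$, and substituting this into (\ref{e-lim}) shows that $(u,u_1)$ is a weak solution of the two-scale system (\ref{eq2}) (choose $v_1=0$ for the macroscopic equation, $v=0$ for the cell equation). The main obstacle is precisely the energy convergence above, that is the identity $\tfrac1\varepsilon\int_\Omega V(\tfrac{x}{\varepsilon})|u_\varepsilon|^p\to p\iint V(y)F(u)u_1$: this is where both the centring condition (H3) and the homogeneous boundary condition on $u_\varepsilon$ are genuinely used, and it is this variant of Lemma~\ref{l2} that unlocks the Minty argument; everything else is routine two-scale convergence.
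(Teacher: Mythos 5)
Your proposal is correct and follows the paper's general skeleton --- the a priori bound of Lemma \ref{l1}, two-scale compactness, oscillating test functions $v+\varepsilon v_1$ in \eqref{eq10}, and Lemma \ref{l2} (resp.\ \eqref{eq7} when $p=2$) for the $\tfrac1\varepsilon$-potential term --- but it takes a genuinely different, and more complete, route at the decisive point: the identification of the two-scale limit of the nonlinear flux $a(\tfrac{x}{\varepsilon})|Du_\varepsilon|^{p-2}Du_\varepsilon$. The paper's proof of Theorem \ref{t3} simply asserts that the limit passage in \eqref{eq10} ``yields'' \eqref{eq13}--\eqref{eq14}, i.e.\ it replaces the weak two-scale limit $\sigma$ of the flux by $a(y)|D_xu+D_yu_1|^{p-2}(D_xu+D_yu_1)$ without argument, which does not follow from weak two-scale convergence of the gradients alone since the map is nonlinear. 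You supply exactly this missing step: first an energy convergence, obtained by testing with $u_\varepsilon$ and running a variant of Lemma \ref{l2} with $|t|^p$ in place of $F$ and with the homogeneous boundary condition replacing the compact support of $\varphi_1$ (legitimate, since $|u_\varepsilon|^p\in W^{1,1}_0(\Omega)$ and $G=D_y\Phi$ is smooth, periodic, with $\int_Y G\,dy=0$); then Minty's trick in the two-scale setting, which converts the energy identity into the equality of $\sigma$ and $a(y)|D_xu+D_yu_1|^{p-2}(D_xu+D_yu_1)$ against all fields $D_xv+D_yv_1$, hence into the weak form of \eqref{eq2}. What this buys is a rigorous limit identification; what it costs is the pointwise monotonicity of $\xi\mapsto a(y)|\xi|^{p-2}\xi$, but that is precisely the monotonicity the paper itself asserts for $T_1$ in Section \ref{s2} and for $\mathcal{A}$ in Section \ref{s8}, so you are on the same footing as the authors there. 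Two minor points of care: the oscillating field $a(\tfrac{x}{\varepsilon})|\Psi^\varepsilon|^{p-2}\Psi^\varepsilon$ is not smooth (only $a\in L^\infty$), so using it as a two-scale test function requires an admissible class such as $L^q_{per}(Y;\mathcal{C}(\overline\Omega))$ rather than the class $L^q(\Omega;\mathcal{C}_{per}(Y))$ quoted in Section \ref{s5}; and for $p=2$ the $\tfrac1\varepsilon$-term must be handled by \eqref{eq7} rather than Lemma \ref{l2}, as you indicate at the outset. Neither affects the validity of your argument.
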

\begin{proof}
Let $\varphi_1\in\mathcal{D}(\Omega)$ and
$\varphi_2\in\mathcal{C}_{per}^\infty(Y)$ with
$\int_Y\varphi_2(y)dy=0$ and put
$\varphi_\varepsilon(x)=\varphi_1(x)+\varepsilon\varphi_2(\frac{x}{\varepsilon})$.
We then multiply (\ref{eq1}) by $\varphi_\varepsilon$ and integrate
over $\Omega$:
\begin{equation}\label{eq10}
\int_\Omega
a(\frac{x}{\varepsilon})|Du_\varepsilon|^{p-2}Du_\varepsilon \cdot
D\varphi_\varepsilon(x)\,dx+\int_\Omega\frac{1}{\varepsilon}V(\frac{x}{\varepsilon})
|u_\varepsilon|^{p-2}u_\varepsilon\varphi_\varepsilon(x)\,dx=\int_\Omega
f\varphi_\varepsilon(x)\,dx.
\end{equation}
By letting $\varphi_2\equiv 0$, a limit passage as $E'\ni
\varepsilon\to 0$, in (\ref{eq10}) using (\ref{eq7}) for the case
$p=2$ yields the two-scale homogenized equation
\begin{equation}\label{eq11}
    \int_\Omega\int_Y a(y)(D_x u+D_y u_1)\cdot
    D_x\varphi_1(x)\,dydx+\int_\Omega\int_Y
    V(y)u_1\varphi_1(x)\,dydx=\int_\Omega f\varphi_1(x)\,dx.
\end{equation}
By switching and letting instead $\varphi_1\equiv 0$ a limit passage
as $E'\ni \varepsilon\to 0$, in (\ref{eq1}) yields the local
two-scale homogenized equation
\begin{equation}\label{eq12}
    \int_\Omega\int_Y a(y)(D_x u+D_y u_1)\cdot
    D_y\varphi_2(y)\,dydx+\int_\Omega\int_Y
    V(y)u(x)\varphi_2(y)\,dydx=0
\end{equation}
By letting $\varphi_2\equiv 0$ a limit passage, $E'\ni\varepsilon\to
0$, in (\ref{eq10}) using Lemma \ref{l2} for $ 2<p<N $ yields the
global two-scale homogenized equation

\begin{eqnarray}
   &&\int_\Omega\int_Y a(y)|D_x u+D_y u_1|^{p-2}(
    D_x u+D_y u_1)\cdot D_x \varphi_1(x)\,dydx\nonumber\\
    &&\qquad+\int_\Omega\int_Y
    V(y)F'(u) u_1\varphi_1(x)\,dydx=\int_\Omega
    f\varphi_1(x)\,dx.\label{eq13}
\end{eqnarray}
By switching and letting instead $\varphi_1\equiv 0$ a limit
passage, $E'\ni\varepsilon\to 0$, in (\ref{eq1}) yields the local
two-scale homogenized equation
\begin{equation}\label{eq14}
\int_\Omega\int_Y a(y)|D_x u+D_y u_1|^{p-2}(
    D_x u+D_y u_1)\cdot D_y \varphi_2(y)\,dydx+\int_\Omega\int_Y
    V(y)F(u)\varphi_2(y)\,dydx=0.
\end{equation}
The differential form of (\ref{eq13}) and (\ref{eq14}) is
(\ref{eq2}), i.e.
\begin{equation*}
\left\{\begin{aligned}& -\text{div}_x\left(a(y)|D_x u+D_y
u_1|^{p-2}(D_x u+D_y u_1)\right)+V(y)F'(u)u_1=f,\\&
-\text{div}_y\left(a(y)|D_x u+D_y u_1|^{p-2}(D_x u+D_y
u_1)\right)+V(y)F(u)=0,\\&u=0\quad\text{ on }
\partial\Omega,
\end{aligned}\right.
\end{equation*}
where a straightforward differentiation for $p>2$ yields
$F'(u)=(p-1)|u|^{p-3}u$ for $u\geq 0$ and $F'(u)=(1-p)|u|^{p-3}u$
for $u< 0$.

\section{Homogenized equation}\label{s8}
For the linear case $p=2$ the local equation (\ref{eq12}) can be
written
\begin{equation}\label{eq15}
    -\text{div}_y (a(y)(D_x u(x)+D_y u_1(x,y)))+V(y)u(x)=0.
\end{equation}
By linearity we can make the usual ansatz
\begin{equation}\label{eq16}
    u_1(x,y)=D_x u(x)\cdot \chi(y).
\end{equation}
We can then write (\ref{eq15}) as
\begin{equation}\label{eq17}
    -\text{div}_y(a(y)D_y\chi(y))=\text{div}_y a(y)-V(y)u(x)[D_x
    u(x)]^{-1}\quad\text{in }\ \ \mathbb{T}^N.
\end{equation}
The right-hand side of (\ref{eq17}) has mean value zero over
$\mathbb{T}^N$ or equivalently over $Y$. Therefore, it is standard
that there exists a unique solution $\chi\in(H^1_\#(Y))^N$ to
(\ref{eq17}). Using (\ref{eq16}) the global homogenized equation can
be written as the convection-diffusion equation
\begin{equation}\label{eq18}
\left\{\begin{aligned}&
\text{div}(\overline{a}Du(x))+\overline{b}\cdot D u(x)=f(x)\  \
\text{in }\ \ \Omega\\&u(x)=0\ \qquad\quad\qquad\qquad\qquad\text{
on }
\partial\Omega,
\end{aligned}\right.
\end{equation}
with effective diffusion
$$
\overline{a}=\int_Y a(y)(I+D_y\chi(y))dy
$$
and effective convection
$$
\overline{b}=\int_Y V(y)\chi(y)dy.
$$
It is clear that (\ref{eq18}) has a unique solution $u\in
H^1_0(\Omega)$. However, if the effective convection field
$\overline{b}$ is large compared to the effective diffusion
$\overline{a}$ the homogenized equation (\ref{eq18}) is numerically
unstable. Let us now look at the nonlinear problem $2<p<N$. By the
nonlinearity one cannot separate variables like in the ansatz
(\ref{eq16}). The global and local two-scale homogenized systems are
coupled in this case. For the case when the principal term is linear
and the lower order term is Lipschitz continuous this is discussed
in \cite{AP2}. Let $x\in\Omega$ and let $\theta\in\mathbb{R}$ and
$\xi\in\mathbb{R}^N$ be fixed. We further let
$\mathcal{A}(\cdot,\xi)=a(\cdot)|\xi|^{p-2}\xi$ and introduce the
cell problem for the local parameter-dependent solution
$\chi(y)=\chi(x,\theta,\xi)(y)$:
\begin{equation}\label{eq19}
\left\{\begin{aligned}&
-\text{div}_y(\mathcal{A}(y,\xi+D_y\chi(y))=-V(y)F(\theta)\ \ \
\text{in} \ Y\\&\chi\in W^{1,p}_\#(Y).\end{aligned}\right.
\end{equation}
Since the right-hand side of (\ref{eq19}) has mean value zero over
$Y$, it is classical by using the theory of monotone elliptic
operators or direct methods in the calculus of variations that there
exists at least one solution $\chi\in W^{1,p}_\#(Y)$ to
(\ref{eq19}). Suppose now that $\chi_1$ and $\chi_2$ are two
solutions to (\ref{eq19}). We get
\begin{equation}\label{eq20}
    \int_Y
    (\mathcal{A}(y,\xi+D_y\chi_1(y))-\mathcal{A}(y,\xi+D_y\chi_2(y)))\cdot(D_y\chi_1(y)-D_y\chi_2(y))dy=0
\end{equation}
By the monotonicity of $\mathcal{A}$ with respect to the second
argument we conclude from (\ref{eq20}) that $D_y\chi_1=D_y\chi_2$ so
that $\chi_1$ and $\chi_2$ only differ by a constant as a function
of $y$. Using now the fact that $\chi_1$ and $\chi_2$ belong to
$W^{1,p}_\#(Y)$ we conclude that $\chi_1=\chi_2$, so that the
solution to (\ref{eq19}) is unique. If we now in particular choose
$\theta=u$ and $\xi=D_x u$ and let $x\in\Omega$ we have proved that
the solution $u_1=\chi(\cdot,u,D_x u)$ to (\ref{eq14}) is unique.

By the properties of $a$, $V$ and $F$ we can now repeat the
arguments for pseudomonotone operators and conclude that the
function $u\in W^{1,p}_0(\Omega)$ in the global two-scale
homogenized system (\ref{eq13})-(\ref{eq14}) is a weak solution to
the macroscopic homogenized equation
\begin{equation}\label{eq21}
\left\{\begin{aligned}& -\text{div}\left(\int_Y\mathcal{A}(y,D_x
u+D_y u_1)dy\right)+\left(\int_Y V(y)u_1dy\right)F'(u)=f\ \ \
\text{in} \ \Omega\\&u=0\qquad\qquad\qquad \qquad\qquad\qquad
\qquad\qquad\qquad\qquad\qquad\quad\ \text{on }
\partial\Omega .\end{aligned}\right.
\end{equation}
\begin{remark}Since we do not know the sign of $\int_Y V(y)u_1dy$ we can only
guarantee existence of solution to (\ref{eq21}). But if the solution
to the homogenized equation (\ref{eq21}) is unique, then the whole
sequence of solutions $u_\varepsilon$  to (\ref{eq1}) in Theorem
\ref{t3} converges to the solution u to (\ref{eq21}) in
$L^p(\Omega)$.
\end{remark}
\end{proof}

\end{document}